\documentclass[11pt,reqno]{article}

\usepackage[a4paper,margin=1.18in]{geometry}
\usepackage[T1]{fontenc}
\usepackage{lmodern}
\usepackage{microtype}
\usepackage{amsmath,amssymb,amsthm,mathtools}
\usepackage{enumitem}
\usepackage{booktabs}
\usepackage{xcolor}
\usepackage{aliascnt}
\usepackage[colorlinks=true,linkcolor=blue,citecolor=blue,urlcolor=blue]{hyperref}
\usepackage[capitalise,noabbrev]{cleveref}

\allowdisplaybreaks
\numberwithin{equation}{section}

\newtheorem{theorem}{Theorem}[section]

\newaliascnt{proposition}{theorem}
\newtheorem{proposition}[proposition]{Proposition}
\aliascntresetthe{proposition}

\newaliascnt{lemma}{theorem}
\newtheorem{lemma}[lemma]{Lemma}
\aliascntresetthe{lemma}

\newaliascnt{corollary}{theorem}
\newtheorem{corollary}[corollary]{Corollary}
\aliascntresetthe{corollary}

\newaliascnt{remark}{theorem}
\newtheorem{remark}[remark]{Remark}
\aliascntresetthe{remark}

\newaliascnt{example}{theorem}

\aliascntresetthe{example}

\theoremstyle{definition}
\newaliascnt{definition}{theorem}

\aliascntresetthe{definition}

\newaliascnt{Hypothesis}{theorem}
\newtheorem{Hypothesis}[Hypothesis]{Hypothesis}
\aliascntresetthe{Hypothesis}

\crefname{theorem}{Theorem}{Theorems}
\crefname{proposition}{Proposition}{Propositions}
\crefname{lemma}{Lemma}{Lemmas}
\crefname{corollary}{Corollary}{Corollaries}
\crefname{remark}{Remark}{Remarks}
\crefname{example}{Example}{Examples}
\crefname{definition}{Definition}{Definitions}
\crefname{Hypothesis}{Hypothesis}{Hypotheses}

\DeclareMathOperator{\add}{add}
\DeclareMathOperator{\End}{End}
\DeclareMathOperator{\Aut}{Aut}
\DeclareMathOperator{\Hom}{Hom}
\DeclareMathOperator{\Ext}{Ext}
\DeclareMathOperator{\Coker}{Coker}
\DeclareMathOperator{\Ker}{Ker}

\DeclareMathOperator{\Gr}{Gr}
\DeclareMathOperator{\ind}{ind}

\DeclareMathOperator{\proj}{proj}
\DeclareMathOperator{\inj}{inj}
\DeclareMathOperator{\modu}{mod}

\newcommand{\C}{\mathcal C}
\newcommand{\A}{\mathcal A}

\newcommand{\Pcal}{\mathcal P}
\newcommand{\E}{\mathbb E}
\newcommand{\F}{\mathsf F}
\newcommand{\X}{\mathsf X}
\newcommand{\kk}{\mathbb C}
\newcommand{\Ksp}{K_0^{\mathrm{sp}}}

\newcommand{\xto}[1]{\xrightarrow{#1}}
\newcommand{\etri}{\dashrightarrow}
\newcommand{\bx}{\mathbf x}

\begin{document}

\title{\bf A Refined Multiplication Formula in
$2$-Calabi--Yau Frobenius Extriangulated Categories}
\author{Ming Ding, Fan Xu and Panyue Zhou}

\maketitle

\begin{abstract}
We prove a multiplication formula for  the Wang--Wei--Zhang cluster character on a Hom-finite $2$-Calabi--Yau Frobenius extriangulated category $\C$ with a cluster-tilting object under the  assumption that its stable category has constructible cones with respect to the induced cluster-tilting object.  This formula generalizes those obtained by Wang--Wei--Zhang for $\C$ in the one-dimensional case and by Keller--Plamondon--Qin for (stably) $2$--Calabi--Yau Frobenius or triangulated categories.  As a consequence, we express the frozen term in an Auslander--Reiten mesh with the character of a minimal cosyzygy
middle term.  For acyclic quivers with principal coefficients, we compute these frozen multiplicities in Higgs categories and obtain explicit principal coefficient mesh relations.
\end{abstract}

\medskip
\noindent\textbf{2020 Mathematics Subject Classification:}
13F60; 18E10; 18E30; 16G70
\vspace{2mm}

\noindent\textbf{Keywords:}
cluster character; extriangulated category; Auslander--Reiten triangle; Frobenius category; multiplication formula.

\section{Introduction}
Multiplication formulas for cluster characters express products of cluster characters in terms of geometric invariants of extension spaces.  The starting point is the Caldero--Chapoton character introduced by Caldero and Chapoton \cite{CalderoChapoton}, which associates Laurent polynomials to objects of cluster categories by means of Euler characteristics of quiver Grassmannians.  For cluster categories, Caldero and Keller \cite{CK,CalderoKeller} established two fundamental multiplication formulas: the first is a formula of Hall type for arbitrary extension dimension in Dynkin quivers, while the second is  the exchange relation for a one-dimensional extension space in acyclic quivers.  For arbitrary acyclic quivers, Xiao and Xu \cite{XiaoXu} established a multiplication formula using a geometric projective version of Green's formula, and Xu \cite{XuAcyclic} gave a proof using Calabi--Yau duality and higher associativity. In a parallel direction, Geiss, Leclerc and Schr\"oer \cite{GLS} obtained analogous multiplication formulas for module categories over preprojective algebras.

Palu \cite{Palu2008} then extended cluster characters to Hom-finite $2$-Calabi--Yau triangulated categories with cluster tilting objects.  In the exact setting, Fu and Keller \cite{FuKeller} constructed cluster characters on Hom-finite $2$-Calabi--Yau Frobenius exact categories.  Their construction incorporates coefficients through the projective--injective summands and satisfies the exchange multiplication relation when the relevant extension space is one-dimensional.  Palu \cite{Palu2012} subsequently established, in the triangulated setting, a multiplication formula for arbitrary pairs of objects, with coefficients given by Euler characteristics of constructible subsets of projectivised extension spaces.  His theorem simultaneously generalizes the finite type Caldero--Keller formula and the acyclic formulas of Xiao--Xu and Xu.  More precisely, for objects $L,M$ in a $2$-Calabi--Yau triangulated category $\A$ with a cluster tilting object, his formula is
\[
 \chi\bigl(\mathbb P\Hom_{\A}(L,\Sigma M)\bigr)X_LX_M
 =\int_{\mathbb P\Hom_{\A}(L,\Sigma M)}X_{\operatorname{mt}(\delta)}\,d\chi
 +\int_{\mathbb P\Hom_{\A}(M,\Sigma L)}X_{\operatorname{mt}(\eta)}\,d\chi.
\]

In this setting, Dominguez and Geiss \cite{DominguezGeiss} proved a particularly useful mesh relation: for an Auslander--Reiten triangle
\[
  \Sigma Z\longrightarrow Y\longrightarrow Z\longrightarrow\Sigma^2 Z,
\]
one has
\[
  X^T_{\Sigma Z}X^T_Z=X^T_Y+1.
\]
This formula goes beyond the usual one-dimensional exchange formula, since the ambient extension space may have dimension greater than one.  In that situation the multiplication formula for the full extension space does not isolate the line generated by the Auslander--Reiten extension.

Recently, Keller, Plamondon and Qin \cite{KellerPlamondonQin} refined Palu's formula by replacing the full extension space with an arbitrary nonzero linear subspace $V$.  The reverse term is then taken over those extension classes which do not annihilate $V$ under the $2$-Calabi--Yau pairing.  Keller, Plamondon and Qin \cite[Section~3]{KellerPlamondonQin} also treated Frobenius exact categories using the Fu--Keller character.  This refinement for arbitrary subspaces by Keller, Plamondon and Qin \cite{KellerPlamondonQin} provides precisely the additional flexibility needed to isolate the Auslander--Reiten extension line of Dominguez and Geiss \cite{DominguezGeiss} when the extension space has dimension greater than one.

Extriangulated categories, introduced by Nakaoka and Palu \cite{NakaokaPalu}, provide a common framework for exact and triangulated categories.  If an extriangulated category is Frobenius, its stable category is triangulated.  Wang, Wei and Zhang \cite{WangWeiZhangCC} extended the Palu and Fu--Keller constructions to $2$-Calabi--Yau Frobenius extriangulated categories. Their cluster character retains projective--injective objects as frozen variables and satisfies the one-dimensional exchange multiplication formula.  Thus the triangulated theory of Palu and the Frobenius exact theory of Fu--Keller fit into a common extriangulated framework.  Keller, Plamondon and Qin \cite{KellerPlamondonQin} explicitly observed that their refined Frobenius formula should extend further to suitable extriangulated categories, including Higgs categories.  We establish an extriangulated version for the Wang--Wei--Zhang character. In the Hom-finite exact case, Faber, Marsh and Pressland \cite[Proposition~5.4]{FaberMarshPressland} identify this character with the Fu--Keller character before frozen specialisation. Thus our formula recovers the Hom-finite Frobenius case of Keller--Plamondon--Qin, see \cref{rem:exact-comparison}.

Two other comparisons are relevant. Fraser, Keller and Wu \cite[Section~5]{FraserKellerWu} constructed a character on Higgs categories and proved its one-dimensional multiplication relation. Grabowski and Pressland \cite[Propositions~5.22 and~5.28]{GrabowskiPressland} developed cluster characters and their behaviour under partial stabilisation in an extriangulated framework. Our result concerns arbitrary subspaces of extension spaces for the explicit character used here. The stable incidence geometry is inherited from Palu and Keller--Plamondon--Qin. The additional step is to establish constructibility and exponent identities in the full split Grothendieck group, including the frozen coordinates.

Let $\C$ be a Hom-finite $2$-Calabi--Yau Frobenius extriangulated category with a basic cluster tilting object $T$ and let $X^T$ be the cluster character of Wang, Wei and Zhang \cite{WangWeiZhangCC}.  The $2$-Calabi--Yau structure gives a nondegenerate pairing
\[
 \beta_{L,M}\colon\E(L,M)\times\E(M,L)\longrightarrow\kk.
\]
For a nonzero subspace $V\subseteq\E(L,M)$, put
\[
 V^{\perp}=\{\eta\in\E(M,L)\mid \beta_{L,M}(v,\eta)=0
 \text{ for every }v\in V\}
\]
and
\[
 \mathcal R_V=\mathbb P\E(M,L)\setminus\mathbb P(V^{\perp}).
\]
Assume that the stable category $\A=\underline{\C}$ has constructible cones with respect to $T'$ in the sense of Palu.  Under this hypothesis, our main result states that
\begin{equation}\label{eq:intro-main}
\begin{aligned}
 \chi(\mathbb PV)X^T_{L}X^T_M
 ={}&\sum_Y\chi\bigl(\mathbb P(V\cap\E(L,M)_{\langle Y\rangle})\bigr)X^T_Y\\
 &+\sum_Y\chi\bigl(\mathcal R_V\cap
        \mathbb P\E(M,L)_{\langle Y\rangle}\bigr)X^T_Y.
\end{aligned}
\end{equation}
Both sums are finite.  No correction term has been suppressed in \eqref{eq:intro-main}, since a projective--injective summand of a middle term is part of that middle term and hence contributes its frozen monomial to $X^T_Y$.  The incidence geometry itself lives in the stable category. The additional extriangulated input is the index defect formula, which recovers the full index of an actual middle term and therefore retains its projective--injective coordinates.  In particular, if
\[
 \tau Z\longrightarrow Y\longrightarrow Z\dashrightarrow,
 \quad
 Z\longrightarrow I_Z\longrightarrow\tau Z\dashrightarrow
\]
are an Auslander--Reiten $\E$-triangle and its minimal companion cosyzygy, then
\[
 [I_Z]
 =\ind_T Z+\ind_T(\tau Z)-\Phi([H(\tau Z)]),
\]
so the coefficient term in the mesh relation is determined entirely by the index-defect data.

For the Higgs category attached to an acyclic quiver with principal coefficients, we further compute the frozen multiplicities directly from the relative Ginzburg dg algebra.  If $M$ is an indecomposable non-projective $A=\kk Q$-module and $Z_M$ is its Higgs lift, then the frozen term in the Auslander--Reiten mesh at $Z_M$ is
\[
 \prod_{i\in Q_0}x_{i^+}^{\dim(\tau_A M)_i}.
\]
Thus the refined formula gives an explicit principal coefficient mesh relation for every such $M$.  Dupont \cite[Definition~2.1 and Proposition~2.2]{DupontCoefficients} already defined the corresponding cluster character with coefficients and proved the Auslander--Reiten multiplication formula for hereditary modules. We therefore record the comparison only in a remark. The principal coefficient mesh identity is accompanied here by an explicit calculation of the projective--injective object which realises its coefficient term. In homogeneous tubes the ambient Higgs extension space has dimension two. Our proof selects the Auslander--Reiten line inside that space.

The paper is organised as follows.  In \cref{sec:background}, we recall Frobenius extriangulated categories, indices and cluster characters.   In \cref{multiplication}, we discuss constructibility, construct the incidence varieties and establish a multiplication formula.  We conclude in \cref{Higgs} with Jacobi-finite Higgs categories and the principal coefficient calculation, including explicit Auslander--Reiten mesh formulas.

We work over the field $\mathbb C$.  Endomorphism algebras have multiplication given by composition, and all modules are right modules unless stated otherwise. Throughout the paper, all categories are $\kk$-linear, Krull--Schmidt and skeletally small.  All Hom and extension spaces under consideration are finite-dimensional.  A Krull--Schmidt additive category is idempotent complete. Euler characteristic means the topological Euler characteristic with compact support.

\section{Preliminaries}\label{sec:background}

\subsection{Extriangulated and stable categories}

An extriangulated category is a triple $(\C,\E,\mathfrak s)$ consisting of an additive category $\C$, a additive bifunctor
\[
 \E\colon\C^{\mathrm{op}}\times\C\longrightarrow {\rm Ab}
\]
and an additive realisation $\mathfrak s$ satisfying the axioms of Nakaoka and Palu \cite{NakaokaPalu}.  If $\delta\in\E(C,A)$ is realised by $A\xto{x}B\xto{y}C$, we write
\[
 A\xto{x}B\xto{y}C\overset{\delta}{\etri}
\]
and call it an $\E$-triangle.  Exact sequences in an exact category and distinguished triangles in a triangulated category are the standard examples.

An object $P$ is projective if $\E(P,-)=0$, and an object $I$ is injective if $\E(-,I)=0$.  The category is Frobenius if it has enough projectives and enough injectives and the two classes coincide.  We denote their common subcategory by $\Pcal$.  The stable category
\[
 \A=\underline{\C}=\C/[\Pcal]
\]
is triangulated by Nakaoka and Palu \cite{NakaokaPalu}.  We write $\pi\colon\C\to\A$ for the quotient functor and $\Sigma$ for the suspension of $\A$.  There are natural isomorphisms
\[
 \E(X,Y)\cong\Hom_{\A}(\pi X,\Sigma\pi Y).
\]

Assume that $\C$ is $2$-Calabi--Yau in the sense of Chang, Zhou and Zhu \cite{CZZ}, meaning that it is equipped with bifunctorial isomorphisms
\begin{equation}\label{eq:2cy}
 \E(X,Y)\cong D\E(Y,X),
 \end{equation}
for any $X,Y\in \C$, where $D=\Hom_{\kk}(-,\kk)$.
Equivalently, there are nondegenerate bifunctorial pairings
\[
 \beta_{X,Y}\colon\E(X,Y)\times\E(Y,X)\longrightarrow\kk.
\]
The stable category $\A$ is then a Hom-finite $2$-Calabi--Yau triangulated category.

\subsection{Cluster tilting objects and the index}
For an object $T$ of $\C$,  denote by $\add T$  the full subcategory of direct summands of finite direct
sums of copies of $T$.  More generally, for a full additive subcategory
$\mathcal U\subseteq\C$, a morphism $U\to X$ with $U\in\mathcal U$ is a right
$\mathcal U$-approximation if every morphism from an object of $\mathcal U$ to
$X$ factors through it.  Contravariant finiteness means that every object has
a right approximation.  Covariant finiteness is defined dually, and
\emph{functorially finite} means both.

Following Wang, Wei and Zhang \cite{WangWeiZhangCC}, an object $T$ is cluster tilting
if $\add T$ is functorially finite and
\[
 \add T
 =\{X\in\C\mid\E(T,X)=0\}
 =\{X\in\C\mid\E(X,T)=0\}.
\]
This also supplies the stronger approximation triangles often built into the
definition.  Indeed, given a right $\add T$-approximation $f\colon T_0\to X$,
choose a deflation $p\colon P\to X$ with $P$ projective.  Since
$P\in\add T$, the map $(f,p)\colon T_0\oplus P\to X$ is both a right
$\add T$-approximation and a deflation, hence occurs in an $\E$-triangle
\[
 K\longrightarrow T_0\oplus P\longrightarrow X\dashrightarrow.
\]
Applying $\Hom_{\C}(T,-)$ shows that $\E(T,K)=0$, since the approximation is surjective on $\Hom_{\C}(T,-)$ and $\E(T,T_0\oplus P)=0$. Thus $K\in\add T$. The dual argument uses enough injectives.  Thus $\add T$ is strongly
functorially finite in the standard extriangulated sense, in agreement with Wang, Wei and Zhang \cite{WangWeiZhangCC}.
Every projective--injective object belongs to $\add T$.  We fix a basic cluster tilting object
\[
 T=T_1\oplus\cdots\oplus T_r\oplus T_{r+1}\oplus\cdots\oplus T_n,
\]
where $T_i$ is non-projective for $i\leq r$ and projective--injective for $i>r$.  Put
\[
 T'=\pi T=\bigoplus_{i=1}^r\pi T_i,
\hspace{2.5mm}
 \Lambda=\End_{\A}(T'),
\hspace{2.5mm}
 G=\Hom_{\A}(T',-).
\]
The right $\Lambda$-action on $G(X)$ is $h\cdot a=h\circ a$. The object $T'$ is cluster tilting in $\A$ by Wang, Wei and Zhang \cite[Lemma~3.1]{WangWeiZhangCC}.

For an additive category $\mathcal U$, its split Grothendieck group
$K_0^{\mathrm{sp}}(\mathcal U)$ is generated by isomorphism classes $[U]$ modulo
$[U\oplus V]=[U]+[V]$.  If $B$ is a finite-dimensional $\kk$-algebra, then
$\modu B$ denotes the category of finite-dimensional right $B$-modules and
$K_0(\modu B)$ its ordinary Grothendieck group.

Every $M\in\C$ occurs in an $\E$-triangle
\begin{equation}\label{eq:T-presentation}
 T_1^M\longrightarrow T_0^M\longrightarrow M\etri,
~~~\mbox{where}~~T_0^M,T_1^M\in\add T.
\end{equation}
The index
\[
 \ind_T(M)=[T_0^M]-[T_1^M]\in\Ksp(\add T)
\]
is independent of the choice of \eqref{eq:T-presentation} by Wang, Wei and Zhang \cite{WangWeiZhangCC}.  For
$\alpha=\sum_i a_i[T_i]$, set $\bx^\alpha=\prod_i x_i^{a_i}$.

Let $H=G\pi$.  Given $M\in\C$, choose an $\E$-triangle
$M^-\to P\to M\etri$ with $P\in\Pcal$ and set
\[
 \Theta(M)=\ind_T(M^-)-[P]+\ind_T(M).
\]
Wang, Wei and Zhang \cite[Section~3, especially Theorem~3.9]{WangWeiZhangCC} proved that $\Theta(M)$ depends only on the $\Lambda$-module $H(M)$ and  the resulting map descends to the Grothendieck group.  Thus one has a homomorphism
\[
 \Phi\colon K_0(\modu\Lambda)\longrightarrow\Ksp(\add T),
\hspace{2.5mm}
 \Phi([H(M)])=\Theta(M).
\]
The following result is the index defect formula of
Wang, Wei and Zhang.

\begin{proposition}{\rm \cite[Proposition~3.7]{WangWeiZhangCC}}\label{prop:index-defect}
For every $\E$-triangle $A\xto{f}B\xto{g}C\etri$, one has
\[
  \ind_T(A)-\ind_T(B)+\ind_T(C)
  =\Phi\bigl([\Coker H(g)]\bigr).
\]
\end{proposition}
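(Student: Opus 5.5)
We prove the index defect formula by reducing the middle term $B$ to a form where the defect can be computed directly from the definition of $\Theta$. We write $\Phi([\Coker H(g)])$ as the value of $\Theta$ on a suitable object, and then compare indices using additivity on $\add T$-presentations.

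First we record two facts that follow from the definition of the index. The index is additive on direct sums. Second, for any $\E$-triangle $A\to B\to C\etri$ whose deflation $g$ satisfies $\Coker H(g)=0$, we have $\ind_T(A)-\ind_T(B)+\ind_T(C)=0$. To prove the second fact, take presentations \eqref{eq:T-presentation} of $A$ and $C$. Surjectivity of $\Hom_{\A}(T',B)\to\Hom_{\A}(T',C)$, together with the fact that projectives lift along deflations, lets us lift the approximation $T_0^C\to C$ to $B$. A horseshoe-type construction in the extriangulated setting, using (ET4) and its dual, then produces a presentation of $B$ with $T_0^B=T_0^A\oplus T_0^C$ and $T_1^B=T_1^A\oplus T_1^C$. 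The point where surjectivity enters is the vanishing of the obstruction in $\E(T_1^C,A)$ or $\E(T,A)$ after passing through the approximation.

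Next we reduce the general case to this one. Choose $M^-\to P\to C\etri$ with $P\in\Pcal$. The morphism $(g,p)\colon B\oplus P\to C$ is a deflation with $H$-image equal to $H(g)$, since $H(P)=0$. A better choice is the following. Take a right $\add T$-approximation $T_0\to C$ and complete $g$ together with it to the deflation $B\oplus T_0\to C$. This deflation is surjective on $\Hom_{\C}(T,-)$, hence surjective after applying $H$. Its cocone $A'$ fits, by (ET4), into an $\E$-triangle $A\to A'\to T_0\etri$. Applying the second fact to $A'\to B\oplus T_0\to C$ gives
\[
\ind_T(A')=\ind_T(B)+[T_0]-\ind_T(C).
\]
It remains to show
\[
\ind_T(A')-\ind_T(A)-[T_0]=\Phi([\Coker H(g)]).
\]
For this we use the triangle $A\to A'\to T_0\etri$ and the long exact sequence of $H$. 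This identifies $\Coker H(g)$ with the image of the connecting map $H(T_0)\to H(\Sigma A)$ rather, equivalently with a subquotient.

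Matching that subquotient with $\Theta$ is the main obstacle. We would have to relate the defect of a triangle ending in $T_0$, with $H(T_0)$ nonzero only through the non-frozen part, to the definition $\Theta(M)=\ind_T(M^-)-[P]+\ind_T(M)$ together with the fact that $\Phi$ is additive on $K_0(\modu\Lambda)$. Our plan is to use the long exact sequence to write $[\Coker H(g)]$ as an alternating sum of classes $[H(X)]$, each of which is the $H$-image of an object appearing in these triangles. Then we apply $\Theta$ to each and check, via the second fact applied to the triangles $X^-\to P\to X\etri$, that the index terms cancel telescopically. The delicate part is ensuring that all exactness statements hold on the nose in $\modu\Lambda$, so that additivity of $\Phi$ applies. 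If this becomes unwieldy, we would fall back on citing \cite[Proposition~3.7]{WangWeiZhangCC} directly, since the statement is exactly their result.
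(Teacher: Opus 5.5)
The paper gives no argument of its own here: the proposition is quoted from Wang--Wei--Zhang \cite[Proposition~3.7]{WangWeiZhangCC}, so your fallback is exactly what the paper does. As a self-contained proof, however, your proposal has a genuine gap, and it sits at the one step that carries the content.

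\textbf{What is sound.} Your zero-defect lemma is right in outline. Surjectivity of $H(g)$ together with projectivity gives surjectivity of $\Hom_{\C}(T,B)\to\Hom_{\C}(T,C)$, and this is what lifts $T_0^C\to C$. The kernel of the horseshoe deflation is then an extension of $T_1^C$ by $T_1^A$, and it splits because $\E(T,T)=0$; that is where rigidity, not surjectivity, enters. The passage to $(g,t)\colon B\oplus T_0\to C$ is also valid.

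\textbf{Where it stops.} The identity left to prove is $\ind_T(A)-\ind_T(A')+[T_0]=\Phi([\Coker H(g)])$; your version has the sign reversed. This is just the proposition itself for $A\to A'\to T_0\etri$, whose deflation has the same cokernel because $H(t)$ is onto. So the reduction only shows that one may assume $C\in\add T$; it says nothing about the defect. The ``alternating sum with telescoping'' plan cannot close this, for three reasons:
\begin{enumerate}[label=(\roman*)]
\item the long exact sequence obtained from $G$ is unbounded;
\item the images occurring in it, such as $\mathrm{Im}(H(A')\to H(T_0))$, are not of the form $H(X)$ for objects of your triangles;
\item relating $\Theta(X)$ to the indices in those triangles would already require the formula being proved.
\end{enumerate}

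\textbf{The missing idea.} You need to realise $K=\Coker H(g)$ by an object and factor the extension through it. Let $\bar\delta\colon\pi C\to\Sigma\pi A$ be the stable connecting morphism of $\delta$.
\begin{enumerate}[label=(\arabic*)]
\item Since $G$ induces $\A/[\Sigma T']\simeq\modu\Lambda$, the epi--mono factorisation $G\pi C\twoheadrightarrow K\hookrightarrow G\Sigma\pi A$ of $G(\bar\delta)$ lifts to $\bar\delta=\bar\beta\circ\pi(\alpha)$. The error term factors through $\add\Sigma T'$ and can be absorbed into an extra summand that $G$ kills. This gives $U\in\C$, $\alpha\colon C\to U$ and $\beta\in\E(U,A)$ with $\delta=\alpha^*\beta$, $H(U)\cong K$, $H(\alpha)$ surjective and $G(\bar\beta)$ injective.
\item Let $A\to W\xto{w}U\overset{\beta}{\etri}$ realise $\beta$; then $H(w)=0$. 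The homotopy pullback along $\alpha$ (\cite[Corollary~3.16]{NakaokaPalu} and its dual) gives an $\E$-triangle $B\to C\oplus W\xto{(\alpha\ w)}U\etri$ with $H$-surjective deflation. Your lemma then yields $\ind_T A-\ind_T B+\ind_T C=\ind_T A-\ind_T W+\ind_T U$.
\item Take $U^-\to P\to U\overset{\theta}{\etri}$ with $P\in\Pcal$. Since $\E(P,A)=0$, one has $\beta=s_*\theta$ for some $s\colon U^-\to A$. The homotopy pushout along $s$ is $U^-\to A\oplus P\to W\etri$, and its deflation is $H$-surjective because $H(A)\to H(W)$ is onto. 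Your lemma gives $\ind_T A-\ind_T W=\ind_T U^- -[P]$.
\end{enumerate}
Combining these, the defect equals $\ind_T U^- -[P]+\ind_T U=\Theta(U)=\Phi([K])$. This route also makes the reduction to $C\in\add T$ unnecessary.
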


\subsection{The cluster character}

For $M\in\C$, define the finite-dimensional $\Lambda$-module
\[
 \F M=G(\Sigma\pi M).
\]
If $e\in K_0(\modu\Lambda)$, let $\Gr_e(\F M)$ be the projective variety of submodules of $\F M$ with class $e$.  The cluster character defined by Wang, Wei and Zhang \cite{WangWeiZhangCC} is
\begin{equation}\label{eq:character}
 \X^T_M
 =\bx^{\ind_T(M)}
   \sum_e\chi\bigl(\Gr_e(\F M)\bigr)\bx^{-\Phi(e)}.
\end{equation}

Note that
\[
  \X^T_{T_i}=x_i,
 \hspace{2.5mm}
  \X^T_{M\oplus N}=\X^T_M\X^T_N.
\]

If $P=\bigoplus_{i=r+1}^nT_i^{m_i}$ is projective--injective, then
\[
 \X^T_P=\prod_{i=r+1}^n x_i^{m_i}.
\]
The variables $x_{r+1},\ldots,x_n$ are therefore the frozen variables.

When $\dim\E(L,M)=1$, the exchange formula holds by Wang, Wei and Zhang \cite[Theorem~4.4]{WangWeiZhangCC}.

\begin{remark}[Comparison in the exact case]\label{rem:exact-comparison}
Suppose that $\C$ is Frobenius exact, and let $X^{\mathrm{FK}}$ be the
Fu--Keller character. Faber, Marsh and Pressland
\cite[Proposition~5.4]{FaberMarshPressland} prove that the Wang--Wei--Zhang
character coincides with $X^{\mathrm{FK}}$ whenever the endomorphism algebra
of $T$ is Noetherian. This hypothesis is automatic here because $\C$ is
Hom-finite. Thus
\[
 \X^T_M=X^{\mathrm{FK}}_M\hspace{2.5mm}(M\in\C),
\]
with all frozen variables retained. Wang, Wei and Zhang
\cite[Remark~4.10]{WangWeiZhangCC} explicitly record that this result removes
the additional comparison condition appearing earlier in their paper.

Consequently our main formula recovers the Frobenius formula of Keller,
Plamondon and Qin \cite[Theorem~3.6]{KellerPlamondonQin} under the common
Hom-finite hypotheses. If the two formulas use different finite constructible
partitions, passing to a common refinement gives the same Euler integral.
The Hom-infinite framework discussed in that reference is not included in
our standing assumptions.
\end{remark}

\section{A refined multiplication formula}\label{multiplication}
\subsection{Constructibility and extension strata}
A constructible subset of a complex algebraic variety is a finite union of locally closed subsets.  A function $f\colon X\to A$ from a variety to an abelian group is constructible if it has finite image and all its fibres are constructible.  If $S\subseteq X$ is constructible, define
\[
 \int_S f\,d\chi
 =\sum_{a\in f(S)}\chi(S\cap f^{-1}(a))a.
\]
We use the additivity and multiplicativity of Euler characteristic and the fact that every affine space has Euler characteristic one.

For a vector space $U$ and a constructible subset $S\subseteq U$ stable under multiplication by nonzero scalars, we write $\mathbb PS$ for the image of $S\setminus\{0\}$ in $\mathbb PU$.  We use the convention $\mathbb P0=\varnothing$.

We shall also use relative quiver Grassmannians.  If a constructible family
$\rho\colon S\to\operatorname{rep}_{\mathbf d}(\Lambda)$ parametrises
$\Lambda$-modules on fixed vector spaces and $e$ is a dimension vector, then
\[
 \Gr_e(\rho)=\{(s,U)\mid s\in S,\ U\subseteq\rho(s)
                   \text{ is a submodule of dimension vector }e\}.
\]
It is a constructible subset of $S\times\prod_i\Gr_{e_i}(\kk^{d_i})$.
The fibre of its first projection at $s$ is the ordinary quiver Grassmannian
$\Gr_e(\rho(s))$.  Constructibility of its fibres of the Euler characteristic follows
from pushforward of constructible functions. For more details, see Palu \cite[Proposition~2.1 and
Lemma~2.4]{Palu2012}.

Fix $L,M\in\C$.  For each $\delta\in\E(L,M)$, choose a realisation
\begin{equation}\label{eq:realisation}
 M\xto{i_\delta}Y_\delta\xto{p_\delta}L
 \overset{\delta}{\etri}.
\end{equation}
In the stable category this gives a triangle.  Applying $G\Sigma$ yields an exact sequence of $\Lambda$-modules
\begin{equation}\label{eq:four-term}
 \F M\xto{a_\delta}\F Y_\delta\xto{b_\delta}\F L
 \xto{c_\delta}G(\Sigma^2\pi M).
\end{equation}
Changing the realisation of $\delta$ changes this sequence only by an isomorphism of diagrams.

We recall the geometric hypothesis in the form used below.  Let
$\vec A_4$ be the quiver $1\to2\to3\to4$.  For a dimension vector
quadruple $\mathbf d$, write
$\operatorname{rep}^{\Lambda}_{\mathbf d}(\vec A_4)$ for the affine variety of
$\vec A_4$-diagrams in $\modu\Lambda$ on fixed vector spaces of dimensions
$\mathbf d$, and let $\mathrm{GL}(\mathbf d)$ act by change of bases.  Put
\[
 \mathbf d_{\max}=
 \bigl(\dim\F M,\ \dim\F M+\dim\F L,\ \dim\F L,
       \dim G(\Sigma^2\pi M)\bigr),
\]
where inequalities between dimension vectors are componentwise.  The exact
diagram \eqref{eq:four-term} defines a map
\begin{equation}\label{eq:orbit-map}
 \Psi_{L,M}\colon\E(L,M)\longrightarrow
 \coprod_{\mathbf d\leq\mathbf d_{\max}}
 \operatorname{rep}^{\Lambda}_{\mathbf d}(\vec A_4)/
 \mathrm{GL}(\mathbf d),
 \quad
 \delta\longmapsto[\eqref{eq:four-term}].
\end{equation}

Following Palu \cite[Section~1.3]{Palu2012}, the cylinders over extensions from
$L$ to $M$ are \emph{constructible with respect to $T'$} if
\eqref{eq:orbit-map} admits a constructible lift
\[
 \widetilde\Psi_{L,M}\colon\E(L,M)\longrightarrow
 \coprod_{\mathbf d\leq\mathbf d_{\max}}
 \operatorname{rep}^{\Lambda}_{\mathbf d}(\vec A_4).
\]
Thus, after a finite constructible stratification, the four modules and the
three maps in \eqref{eq:four-term} are represented by matrices whose entries
vary regularly with $\delta$.

\begin{Hypothesis}\label{hyp:constructible-cones}
The stable category $\A=\underline{\C}$ has constructible cones with respect to $T'$ in the sense of Palu \cite[Section~1.3]{Palu2012}. Equivalently, for every pair $L,M\in\C$, the cylinders over $\pi L\to\Sigma\pi M$ and $\pi M\to\Sigma\pi L$ are constructible with respect to $T'$ in the preceding sense.
\end{Hypothesis}

\begin{remark}[The one dimensional case]\label{rem:one-dimensional}
Fix $L,M\in\C$ with $\dim_{\kk}\E(L,M)=1$. The part of \cref{hyp:constructible-cones} needed for this pair is automatic. Indeed, $2$-Calabi--Yau duality gives $\dim_{\kk}\E(M,L)=1$. Each extension space is the disjoint union of the zero class and a single nonzero scalar orbit. Nonzero scalar multiples of an extension have isomorphic middle terms, and their four-term module diagrams differ only by rescaling structural maps. Hence the two cylinders and all the incidence loci used for the pair $(L,M)$ are constructible.

Taking $V=\E(L,M)$, both projectivised extension spaces are points and the nondegenerate pairing gives $V^{\perp}=0$. The main formula therefore becomes the one dimensional exchange formula of Wang, Wei and Zhang \cite[Theorem~4.4]{WangWeiZhangCC}. Thus their result is recovered without any additional constructibility assumption in this case.
\end{remark}

\begin{proposition}\label{prop:constructible-lift}
Under \cref{hyp:constructible-cones}, for any fixed pair $L,M\in\C$, the following hold:
\begin{enumerate}[label=\textup{(\roman*)},leftmargin=2.6em]
\item the four-term diagrams \eqref{eq:four-term} and all incidence loci obtained from their submodules, images and inverse images form constructible families.
\item the function $\delta\mapsto\ind_T(Y_\delta)$ has finite image and constructible fibres.
\end{enumerate}
\end{proposition}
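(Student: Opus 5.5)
Part (i) follows from \cref{hyp:constructible-cones} together with Palu's constructibility lemmas for relative quiver Grassmannians. Part (ii) is then read off from part (i) using the index defect formula \cref{prop:index-defect}.

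For (i), apply \cref{hyp:constructible-cones} to the cylinder over $\pi L\to\Sigma\pi M$. This gives a constructible lift $\widetilde\Psi_{L,M}$ of the orbit map \eqref{eq:orbit-map}. Refine to a finite constructible stratification $\E(L,M)=\bigsqcup_j S_j$ on which $\widetilde\Psi_{L,M}$ is regular and the dimension vector $\mathbf d$ is constant. On each stratum the modules $\F M$, $\F Y_\delta$, $\F L$, $G(\Sigma^2\pi M)$ and the maps $a_\delta,b_\delta,c_\delta$ are then given by matrices depending regularly on $\delta$. The incidence loci are of the form
\[
\{(\delta,U)\mid U\subseteq \F Y_\delta \text{ a submodule},\ \dim a_\delta^{-1}U=e,\ \dim b_\delta(U)=f\}.
\]
They are cut out of the relative Grassmannian $\Gr_g(\rho)$ of \S3.1 by rank conditions on regular matrices. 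Ranks of matrices with regular entries are constructible functions. Images and inverse images of subspaces under such matrices vary constructibly in the Grassmannians, by Chevalley's theorem applied to the graph. Thus these loci are constructible, as in Palu \cite[Proposition~2.1, Lemma~2.4]{Palu2012}. The same argument applies to $\E(M,L)$ using the second cylinder.

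For (ii), apply \cref{prop:index-defect} to the realisation \eqref{eq:realisation}, which gives
\[
\ind_T(Y_\delta)=\ind_T(M)+\ind_T(L)-\Phi\bigl([\Coker H(p_\delta)]\bigr).
\]
It therefore suffices to show that $\delta\mapsto[\Coker H(p_\delta)]\in K_0(\modu\Lambda)$ has finite image and constructible fibres. Apply $G$ to the stable triangle $\pi M\to\pi Y_\delta\to\pi L\to\Sigma\pi M$. By exactness, $\Coker H(p_\delta)\cong\operatorname{Im}\bigl(G(\pi L)\to G(\Sigma\pi M)\bigr)=\Ker a_\delta$, since $G(\Sigma\pi M)=\F M$ and the next map in the long exact sequence is $a_\delta$. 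The dimension vector of $\Ker a_\delta$ is computed idempotent-wise by ranks of the regular matrix $a_\delta$ on each stratum. Its values lie in the finite set of vectors bounded by $\dim\F M$, and its fibres are constructible. Because $\Lambda$ is finite-dimensional, a class in $K_0(\modu\Lambda)$ is determined by the dimension vector. Since $\Phi$ is a homomorphism, this gives (ii).

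The main obstacle is the identification $\Coker H(p_\delta)\cong\Ker a_\delta$ with the correct shift. One must check that the connecting morphism in the stable triangle, after applying $G$, is the map $G(\pi L)\to G(\Sigma\pi M)$ whose composite with $a_\delta$ vanishes, up to sign. One must also check that this does not depend on the chosen realisation of $\delta$. This should follow from the standard isomorphism $\E(L,M)\cong\Hom_{\A}(\pi L,\Sigma\pi M)$ sending $\delta$ to the third morphism of the induced triangle. A secondary point is that a constructible lift only guarantees regularity on strata, so constructibility must be proved stratum by stratum and then assembled as a finite union.
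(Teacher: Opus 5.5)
Your proposal is correct and follows essentially the same route as the paper: part (i) comes from the constructible lift supplied by the hypothesis, refined by rank strata. Part (ii) comes from the index defect formula together with the identification $\Coker H(p_\delta)\cong\Ker a_\delta$, whose class in $K_0(\modu\Lambda)$ is constant on each rank stratum and takes only finitely many values (the paper labels $[b_\delta(U)]=e$ and $[a_\delta^{-1}(U)]=f$, the reverse of your convention, which is harmless here).
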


\begin{proof}
By \cref{hyp:constructible-cones}, after a finite constructible stratification of $\E(L,M)$ the four modules and three maps in \eqref{eq:four-term} are represented on fixed vector spaces by matrices depending regularly on $\delta$.  Refining the stratification by the ranks of these matrices, kernels and images become algebraic subbundles on each stratum.  Conditions such as $U\subseteq\F Y_\delta$ being a submodule, the class of $b_\delta(U)$, and the class of $a_\delta^{-1}(U)$ are then rank conditions inside products with ordinary Grassmannians.  Hence the corresponding relative quiver Grassmannians and incidence loci are constructible.  This proves (i). Compare Keller, Plamondon and Qin \cite[Lemma~2.12]{KellerPlamondonQin} and Palu \cite[Lemma~3.1]{Palu2012}.

Put $K_\delta=\Ker(a_\delta)$.  The long exact sequence obtained by applying $G$ to the stable triangle identifies
\[
 K_\delta\simeq\Coker H(p_\delta).
\]
The index defect formula \cref{prop:index-defect} therefore gives
\begin{equation}\label{eq:index-from-stable-cone}
 \ind_T(Y_\delta)
 =\ind_T(M)+\ind_T(L)-\Phi([K_\delta]).
\end{equation}
On each stratum from (i) on which the rank is constant, the class $[K_\delta]\in K_0(\modu\Lambda)$ is constant. Only finitely many such classes occur.  Formula \eqref{eq:index-from-stable-cone} therefore proves (ii).  Notice that its values lie in the full group $K_0^{\mathrm{sp}}(\add T)$, so the projective--injective, or frozen, coordinates have not been discarded.
\end{proof}

For objects $Y,Y'\in\C$, write $Y\sim_TY'$ if
\[
 \ind_T(Y)=\ind_T(Y')
\]
and
\[
 \chi\bigl(\Gr_e(\F Y)\bigr)
 =\chi\bigl(\Gr_e(\F Y')\bigr)
 \quad\text{for every }e\in K_0(\modu\Lambda).
\]
Then $Y\sim_TY'$ implies $\X^T_Y=\X^T_{Y'}$ by \eqref{eq:character}.

For an object $Y$ which occurs as a middle term of an extension in $\E(L,M)$, let
\[
 \E(L,M)_{\langle Y\rangle}
 =\{\delta\in\E(L,M)\mid Y_\delta\sim_TY\}.
\]
The subset is stable under multiplication by nonzero scalars.  Let $\mathcal Y_{L,M}$ be a set of representatives of the nonempty equivalence classes.

\begin{proposition}\label{prop:stratification}
Under \cref{hyp:constructible-cones}, the decomposition
\[
 \E(L,M)=\bigsqcup_{Y\in\mathcal Y_{L,M}}
 \E(L,M)_{\langle Y\rangle}
\]
is a finite constructible stratification.
\end{proposition}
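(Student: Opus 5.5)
The plan is to show that the map $\delta\mapsto(\ind_T(Y_\delta),(\chi(\Gr_e(\F Y_\delta)))_e)$ has finite image and constructible fibres on $\E(L,M)$. The sets $\E(L,M)_{\langle Y\rangle}$ are exactly the fibres of this map, so finiteness and constructibility of the stratification both follow. Disjointness and covering are automatic, since $\sim_T$ is an equivalence relation and every $\delta$ has a realisation \eqref{eq:realisation} whose middle term lies in some class. Well-definedness is also clear: changing the realisation of $\delta$ changes $Y_\delta$ only up to isomorphism, and this preserves both the index and $\F Y_\delta$ up to isomorphism.

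First I treat the index coordinate. By \cref{prop:constructible-lift}(ii), the function $\delta\mapsto\ind_T(Y_\delta)$ has finite image and constructible fibres. This is where the frozen coordinates enter, via \eqref{eq:index-from-stable-cone}.

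Second, I treat the Grassmannian coordinates. By \cref{hyp:constructible-cones} there is a finite constructible stratification of $\E(L,M)$ on each stratum $S$ of which $\F Y_\delta$ is given by a constructible (indeed regular) family $\rho_S\colon S\to\operatorname{rep}_{\mathbf d}(\Lambda)$ with $\mathbf d$ fixed. The dimension vector $\mathbf d$ is bounded by $\dim\F M+\dim\F L$, so only finitely many $e$ with $e\le\mathbf d$ have nonempty $\Gr_e(\F Y_\delta)$. For each such $e$, the relative quiver Grassmannian $\Gr_e(\rho_S)$ is constructible, as noted after the definition and in \cref{prop:constructible-lift}(i). Its first projection to $S$ has fibre $\Gr_e(\F Y_\delta)$. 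Pushforward of the constant function $1$ along this projection, as in Palu \cite[Proposition~2.1]{Palu2012}, shows that $\delta\mapsto\chi(\Gr_e(\F Y_\delta))$ is a constructible function on $S$. In particular it takes finitely many values, each on a constructible subset.

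Finally, I intersect finitely many finite constructible partitions: one from the index, and one for each of the finitely many pairs $(S,e)$. This yields a finite constructible partition of $\E(L,M)$ on whose pieces the full invariant is constant. Hence each fibre $\E(L,M)_{\langle Y\rangle}$ is a finite union of constructible pieces, and only finitely many classes occur. The main point needing care is the pushforward step: Euler characteristic fibrewise is constructible only because the family is constructible over $\mathbb C$, and this is exactly what the hypothesis supplies. I expect the rest to be bookkeeping.
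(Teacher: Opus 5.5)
Your proposal is correct and follows the paper's proof essentially step for step. You stratify by the full index using \cref{prop:constructible-lift}(ii), obtain constructibility of $\delta\mapsto\chi(\Gr_e(\F Y_\delta))$ by pushing forward along the relative quiver Grassmannian of the constructible family supplied by \cref{hyp:constructible-cones}, bound the relevant classes $e$ via exactness of \eqref{eq:four-term}, and intersect the finitely many resulting partitions. Your extra remarks on well-definedness and on extending the partition of each stratum $S$ to all of $\E(L,M)$ are harmless bookkeeping that the paper leaves implicit.
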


\begin{proof}
By \cref{prop:constructible-lift}(i), the module $\F Y_\delta$ varies in a constructible family of representations of $\Lambda$.  For any fixed $e$, the function
\[
 \delta\longmapsto\chi\bigl(\Gr_e(\F Y_\delta)\bigr)
\]
is constructible.  This is the standard pushforward of the characteristic function of the relative quiver Grassmannian.  Only finitely many $e$ occur, because exactness of \eqref{eq:four-term} bounds the dimension vector of every middle module in terms of the fixed end modules.  Part (ii) of \cref{prop:constructible-lift} gives a finite constructible stratification by the full index.  Intersecting these finitely many stratifications proves the assertion.
\end{proof}

If $S\subseteq\mathbb P\E(L,M)$ is constructible, the function
$[\delta]\mapsto\X^T_{Y_\delta}$ is integrable and
\[
 \int_S\X^T_{Y_\delta}\,d\chi
 =\sum_{Y\in\mathcal Y_{L,M}}
   \chi\bigl(S\cap\mathbb P\E(L,M)_{\langle Y\rangle}\bigr)\X^T_Y.
\]
\begin{remark}
The stable stratification is due to Palu \cite[Proposition~2.8]{Palu2012} and is restated by Keller, Plamondon and Qin \cite[Proposition~2.7]{KellerPlamondonQin}.  The proof above records why the full extriangulated index, including its frozen coordinates, remains constructible.
\end{remark}

\subsection{Incidence varieties}\label{sec:incidence}

Fix $L,M\in\C$ and a nonzero subspace $V\subseteq\E(L,M)$.  For $e,f\in K_0(\modu\Lambda)$, put
\[
 \mathcal B_{e,f}
 =\mathbb PV\times\Gr_e(\F L)\times\Gr_f(\F M).
\]
The product of the cluster characters may be written as
\begin{equation}\label{eq:lhs-expansion}
 \chi(\mathbb PV)\X^T_L\X^T_M
 =\bx^{\ind_T(L\oplus M)}
   \sum_{e,f}\chi(\mathcal B_{e,f})\bx^{-\Phi(e+f)}.
\end{equation}

Let $\mathcal W^V_{L,M}(e,f,g)$ be the set of pairs $([\delta],U)$ such that
\begin{itemize}[leftmargin=2em]
\item $0\neq\delta\in V$.
\item $U\subseteq\F Y_\delta$ is a $\Lambda$-submodule with $[U]=g$.
\item $[b_\delta(U)]=e$ and $[a_\delta^{-1}(U)]=f$.
\end{itemize}
Set
\[
 \mathcal W^V_{L,M}(e,f)=\bigsqcup_g\mathcal W^V_{L,M}(e,f,g).
\]
These conditions are well defined on the projective class $[\delta]$: changing the realisation gives an isomorphic module diagram, while multiplying $\delta$ by a nonzero scalar only rescales a structural map and does not change the relevant image or inverse image submodules.  By \cref{prop:constructible-lift}(i), these are constructible sets.  There is a constructible map
\[
\Psi_{e,f}\colon\mathcal W^V_{L,M}(e,f)\longrightarrow\mathcal B_{e,f},
 \hspace{2.5mm}
 ([\delta],U)\longmapsto
 ([\delta],b_\delta U,a_\delta^{-1}U).
\]
Let $\mathcal L^V_1(e,f)$ be its image and $
 \mathcal L^V_2(e,f)=\mathcal B_{e,f}\setminus\mathcal L^V_1(e,f).
$ Then we have the following result.

\begin{lemma}\label{lem:forward-fibres}
Every nonempty fibre of $\Psi_{e,f}$ is an affine space.  Consequently,
\[
 \chi\bigl(\mathcal L^V_1(e,f)\bigr)
 =\sum_g\chi\bigl(\mathcal W^V_{L,M}(e,f,g)\bigr).
\]
\end{lemma}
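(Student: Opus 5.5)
We reduce to the linear-algebra statement used by Caldero--Keller, Palu and Keller--Plamondon--Qin \cite[Lemma~2.13]{KellerPlamondonQin}. Fix a point $([\delta],E,F)\in\mathcal B_{e,f}$ in the image, and fix a realisation \eqref{eq:realisation} of $\delta$ with exact sequence \eqref{eq:four-term}. The fibre consists of submodules $U\subseteq\F Y_\delta$ with $b_\delta(U)=E$ and $a_\delta^{-1}(U)=F$. The condition $a_\delta^{-1}(U)=F$ makes sense only when $F\supseteq K_\delta=\Ker a_\delta$. In that case it amounts to $U\cap\operatorname{Im}a_\delta=a_\delta(F)$. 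So the fibre is the set of submodules $U$ with
\[
U\cap \operatorname{Im}a_\delta=a_\delta F
\quad\text{and}\quad
(U+\operatorname{Im}a_\delta)/\operatorname{Im}a_\delta = E,
\]
where we identify $\F Y_\delta/\operatorname{Im}a_\delta$ with $\operatorname{Im}b_\delta$. Passing to the quotient $\overline{Y}=\F Y_\delta/a_\delta F$, such $U$ correspond bijectively to submodules $\overline U\subseteq \overline Y$ with $\overline U\cap(\operatorname{Im}a_\delta/a_\delta F)=0$ and with image $E$. These are exactly the complements: $\overline U$ is a $\Lambda$-linear section of the short exact sequence
\[
0\to \operatorname{Im}a_\delta/a_\delta F\to b_\delta^{-1}(E)/a_\delta F\to E\to 0 .
\]

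The set of such sections is nonempty, since we assumed the fibre is nonempty. It is therefore a torsor under $\Hom_\Lambda(E,\operatorname{Im}a_\delta/a_\delta F)$, acting by adding a homomorphism to a fixed splitting. Hence it is an affine space of dimension $\dim\Hom_\Lambda(E,\operatorname{Im}a_\delta/a_\delta F)$. Its identification with the fibre is algebraic, because it is given by the graph of the homomorphism inside a product of Grassmannians. Changing the realisation of $\delta$ or rescaling $\delta$ transports everything by an isomorphism, so the fibre is well defined on $[\delta]$.

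For the Euler characteristic identity, we use \cref{prop:constructible-lift}(i): $\Psi_{e,f}$ is a constructible map between constructible sets, so $\chi(\mathcal W^V_{L,M}(e,f))=\int_{\mathcal L^V_1(e,f)}\chi(\Psi_{e,f}^{-1}(x))\,d\chi$ by pushforward of constructible functions. Every fibre is an affine space, so each $\chi$ in the integrand equals $1$, and the integral is $\chi(\mathcal L^V_1(e,f))$. The decomposition of $\mathcal W^V_{L,M}(e,f)$ into the disjoint pieces over $g$ is finite and constructible, since $g$ is bounded by $\dim\F Y_\delta$. Additivity of $\chi$ then gives the sum over $g$.

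The main obstacle is the pushforward step. The fibre dimension $\dim\Hom_\Lambda(E,\cdot)$ may jump across the family, so we cannot simply claim a Zariski-locally trivial affine bundle. We would handle this by invoking the fibrewise pushforward formula for constructible maps, which needs only that each fibre has Euler characteristic $1$. If one wants a bundle statement, one first refines the stratification from \cref{prop:constructible-lift}(i) by this Hom-dimension; on each stratum the fibration is a vector-bundle torsor.
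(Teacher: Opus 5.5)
Your proposal is correct and follows the paper's route: the paper simply cites Palu's Lemma~3.2 (restricted to $V$ as in Keller--Plamondon--Qin, Lemma~2.13) and takes constructibility from \cref{prop:constructible-lift}(i). The content of that citation is exactly your argument, namely that the fibre over $([\delta],E,F)$ is the $\Hom_\Lambda(E,\operatorname{Im}a_\delta/a_\delta F)$-torsor of $\Lambda$-linear complements, where $\operatorname{Im}a_\delta/a_\delta F\cong\F M/F$ because $F\supseteq\Ker a_\delta$; your pushforward step, which needs only $\chi=1$ on each fibre, then yields the displayed identity.
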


This is Palu \cite[Lemma~3.2]{Palu2012}, restricted to the subspace $V$ exactly as in Keller, Plamondon and Qin \cite[Lemma~2.13]{KellerPlamondonQin}.  The four-term sequence \eqref{eq:four-term} is the module diagram used there, and \cref{prop:constructible-lift}(i) supplies constructibility.

Let
\[
 \beta_{L,M}\colon\E(L,M)\times\E(M,L)\longrightarrow\kk
\]
be the nondegenerate pairing induced by \eqref{eq:2cy}.  Define
\[
 V^\perp=\{\eta\in\E(M,L)\mid \beta_{L,M}(v,\eta)=0
 \text{ for all }v\in V\}
\]
and
\[
 \mathcal R_V=\mathbb P\E(M,L)\setminus\mathbb P(V^\perp).
\]
For $0\neq\eta\in\E(M,L)$, choose a realisation
\[
 L\xto{j_\eta}Y'_\eta\xto{q_\eta}M\overset{\eta}{\etri}.
\]
Applying $G\Sigma$ gives maps
\[
 \F L\xto{a'_\eta}\F Y'_\eta\xto{b'_\eta}\F M.
\]
Define $\mathcal W^{\mathcal R}_{M,L}(f,e,g)$ as the set of pairs $([\eta],U')$ with $[\eta]\in\mathcal R_V$, $U'\subseteq\F Y'_\eta$, and
\[
 [U']=g,
\hspace{2.5mm}
 [b'_\eta U']=f,
\hspace{2.5mm}
 [(a'_\eta)^{-1}U']=e.
\]

The following is the geometric heart of the multiplication formula.  It is the $\E$-extension form of the correspondence proved by Palu \cite[Propositions~3.3 and~3.4]{Palu2012} and refined by Keller, Plamondon and Qin \cite[Lemmas~2.13 and~2.14]{KellerPlamondonQin}.

\begin{lemma}\label{lem:complement}
For every $e,f$, we have
\[
 \chi\bigl(\mathcal L^V_2(e,f)\bigr)
 =\sum_g\chi\bigl(\mathcal W^{\mathcal R}_{M,L}(f,e,g)\bigr).
\]
\end{lemma}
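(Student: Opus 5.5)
The plan is to reduce the statement to the stable triangulated category $\A$ and then repeat the Palu / Keller--Plamondon--Qin incidence argument, since every object in the statement depends only on stable data. The sets $\mathcal B_{e,f}$, $\mathcal L^V_1(e,f)$, $\mathcal L^V_2(e,f)$ and $\mathcal W^{\mathcal R}_{M,L}(f,e,g)$ involve only:
\begin{itemize}
\item the $\Lambda$-modules $\F L=G(\Sigma\pi L)$, $\F M$ and $\F Y_\delta$, $\F Y'_\eta$, together with the maps $a_\delta,b_\delta,a'_\eta,b'_\eta$;
\item the pairing $\beta_{L,M}$.
\end{itemize}
Under the isomorphisms $\E(X,Y)\cong\Hom_{\A}(\pi X,\Sigma\pi Y)$, an $\E$-triangle realising $\delta$ maps under $\pi$ to a distinguished triangle $\pi M\to\pi Y_\delta\to\pi L\xto{\delta}\Sigma\pi M$ in $\A$. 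Hence \eqref{eq:four-term} is exactly the sequence Palu obtains by applying $G\Sigma$ to this stable triangle.

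I will also check that the $2$-Calabi--Yau pairing $\beta$ on $\C$ agrees, up to the identifications above, with the $2$-Calabi--Yau pairing of $\A$. This holds because $\A$ inherits its $2$-Calabi--Yau structure from \eqref{eq:2cy} through these same isomorphisms, so $V^\perp$ and $\mathcal R_V$ are the stable ones.

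Granting this, the key steps follow Palu \cite[Propositions~3.3 and~3.4]{Palu2012} as refined by Keller--Plamondon--Qin.

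\textbf{Step 1: describe $\mathcal L^V_2$ pointwise.} Fix $(E,F)\in\Gr_e(\F L)\times\Gr_f(\F M)$. The set of $\delta\in\E(L,M)$ for which $(E,F)$ lifts to some $U\subseteq\F Y_\delta$, i.e.\ $([\delta],E,F)\in\mathcal L_1$, is a linear subspace. Its orthogonal under $\beta$ is identified with the set of $\eta\in\E(M,L)$ for which $(F,E)$ lifts along $\eta$. This is Palu's key lemma, proved via the $2$-CY duality $\Hom_{\A}(X,\Sigma Y)\cong D\Hom_{\A}(Y,\Sigma X)$ and the compatibility of $G$ with it.

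\textbf{Step 2: restrict to $V$.} Intersecting with $V$, the fibre of $\mathcal L^V_2(e,f)\to\Gr_e\times\Gr_f$ over $(E,F)$ is $\mathbb PV\setminus\mathbb P(V\cap W_{E,F})$, where $W_{E,F}$ denotes the subspace from Step~1. The fibre over $(F,E)$ of the projection of $\bigsqcup_g\mathcal W^{\mathcal R}_{M,L}(f,e,g)$, after dividing out the affine fibres given by \cref{lem:forward-fibres} applied to $(M,L)$, is $\mathcal R_V\cap\mathbb P(W_{E,F}^\perp)$.

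\textbf{Step 3: compare Euler characteristics fibrewise.} Both fibres are complements of projective subspaces in a projective space. Put $d=\dim V$, $k=\dim(V\cap W)$ and $w=\dim W^\perp$. The first fibre has Euler characteristic $d-k$. Since $W^\perp\cap V^\perp=(W+V)^\perp$ has dimension $w-(d-k)$, the second fibre has Euler characteristic $w-(w-(d-k))=d-k$. The two fibres therefore agree, and integrating over the constructible base $\Gr_e(\F L)\times\Gr_f(\F M)$ with constructible fibre dimensions (\cref{prop:constructible-lift}(i)) gives the identity.

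The main obstacle is Step 1: verifying that Palu's stable duality lemma applies verbatim to extensions in $\C$. That lemma uses that $T'$ is cluster tilting in $\A$ (WWZ Lemma~3.1) and that the $2$-CY structure on $\A$ is the one induced from $\C$. I will handle this by transporting everything along $\pi$ and quoting Palu. I will also check that the frozen summands play no role here, because $G$ kills $\Pcal$ and the sets in question are defined purely in terms of $G$.
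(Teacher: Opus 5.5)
Your proposal is correct. Its first half is exactly the paper's reduction: transport along $\E(X,Y)\cong\Hom_{\A}(\pi X,\Sigma\pi Y)$, identify the $2$-Calabi--Yau pairings so that $V^\perp$ and $\mathcal R_V$ become the stable ones, and observe that $\F$ never sees the frozen summands.

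The difference is in how the Euler characteristic identity is extracted from Palu's orthogonality statement.

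\emph{The paper's route.} The paper imports the correspondence from the proof of Keller--Plamondon--Qin's Lemma~2.14. This is the locus of tuples $([\delta],[\eta],E,F)$ with $\delta\in V$, $\eta\in W_{E,F}^\perp$ and $\beta_{L,M}(\delta,\eta)\neq0$. Its two projections, onto $\mathcal L^V_2(e,f)$ and onto the reverse lifting locus over $\mathcal R_V$, are surjective, and their fibres are complements of hyperplanes in projective spaces. So only the constant-fibre case of the Euler characteristic Fubini formula is needed.

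\emph{Your route.} You project both sides to $\Gr_e(\F L)\times\Gr_f(\F M)$ and compare fibres directly, using
\[
\dim W^\perp-\dim(W+V)^\perp=\dim V-\dim(V\cap W).
\]
Here the fibre Euler characteristic $d-k$ varies with $(E,F)$, so you need the general pushforward of constructible functions. This is standard and follows from the constructibility in \cref{prop:constructible-lift}(i).

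\emph{Comparison.} Both routes rest on the same input: $W_{E,F}$ is a linear subspace, and $W_{E,F}^\perp$ is the reverse lifting locus. The correspondence needs both inclusions of this statement, just as your dimension count does. Your fibrewise count makes the roles of $V$ and $\mathcal R_V$ fully transparent. The correspondence avoids any discussion of how $\dim(V\cap W_{E,F})$ varies over the base.
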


\begin{proof}
Let
\[
 \vartheta_{L,M}\colon\E(L,M)\xrightarrow{\sim}
 \Hom_{\A}(\pi L,\Sigma\pi M)
\]
be the canonical linear isomorphism attached to the Frobenius stable category, and define $\vartheta_{M,L}$ similarly.  These isomorphisms identify the $2$-Calabi--Yau pairing on $\E$ with the pairing on the stable category.  Hence $\vartheta_{L,M}(V)^\perp=\vartheta_{M,L}(V^\perp)$, and the induced projective isomorphism carries $\mathcal R_V$ onto the reverse locus $R$ of Keller, Plamondon and Qin \cite[Lemma~2.14]{KellerPlamondonQin}.

Wang, Wei and Zhang \cite[Lemma~3.1]{WangWeiZhangCC} show that a realisation of an $\E$-extension maps to the stable triangle determined by the corresponding morphism under $\vartheta_{L,M}$.  Applying $G\Sigma$ to that triangle gives exactly the four-term module diagram \eqref{eq:four-term}.  Thus, after using the constructible lifts supplied by \cref{hyp:constructible-cones}, the sets $\mathcal B_{e,f}$, $\mathcal L^V_2(e,f)$ and $\mathcal W^{\mathcal R}_{M,L}(f,e,g)$ are the pullbacks, under the above projective linear isomorphisms, of the corresponding sets of Keller, Plamondon and Qin \cite[Lemmas~2.13 and~2.14]{KellerPlamondonQin}.  Changing a realisation only changes the module diagram by a base change, and replacing an extension by a nonzero scalar changes only a structural map by a nonzero scalar. The image and inverse image submodules occurring in the incidence conditions are therefore unchanged up to the evident isomorphism.  Consequently the construction is independent of these choices.

The two projections in the proof of Keller, Plamondon and Qin \cite[Lemma~2.14]{KellerPlamondonQin} are surjective and have fibres of Euler characteristic one. This is the correspondence obtained from Palu \cite[Propositions~3.3 and~3.4]{Palu2012}. For each fixed pair $(e,f)$, its identity for Euler characteristics gives
\[
 \chi\bigl(\mathcal L^V_2(e,f)\bigr)
 =\sum_g\chi\bigl(\mathcal W^{\mathcal R}_{M,L}(f,e,g)\bigr).
\]
No projective--injective information is needed in this geometric correspondence. It enters again through the full index in \cref{lem:exponent}.
\end{proof}

\begin{lemma}\label{lem:exponent}
If $([\delta],U)\in\mathcal W^V_{L,M}(e,f,g)$, then
\begin{equation}\label{eq:forward-exponent}
 \ind_T(L\oplus M)-\Phi(e+f)
 =\ind_T(Y_\delta)-\Phi(g).
\end{equation}
Similarly, if $([\eta],U')\in\mathcal W^{\mathcal R}_{M,L}(f,e,g)$, then
\begin{equation}\label{eq:reverse-exponent}
 \ind_T(L\oplus M)-\Phi(e+f)
 =\ind_T(Y'_\eta)-\Phi(g).
\end{equation}
\end{lemma}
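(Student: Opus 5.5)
The plan is to compare the two sides of \eqref{eq:forward-exponent} using the index formula \eqref{eq:index-from-stable-cone} from the proof of \cref{prop:constructible-lift}, and then to check an identity of classes in $K_0(\modu\Lambda)$ that comes from elementary linear algebra on the exact sequence \eqref{eq:four-term}. Put $K_\delta=\Ker(a_\delta)$. Formula \eqref{eq:index-from-stable-cone}, which is itself \cref{prop:index-defect} combined with $K_\delta\simeq\Coker H(p_\delta)$, gives
\[
 \ind_T(Y_\delta)=\ind_T(L\oplus M)-\Phi([K_\delta]).
\]
Since $\Phi$ is a homomorphism, \eqref{eq:forward-exponent} then reduces to the single identity
\[
 e+f=g+[K_\delta]\quad\text{in }K_0(\modu\Lambda).
\]

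To prove this identity, I fix $([\delta],U)\in\mathcal W^V_{L,M}(e,f,g)$ and set $U_0=U\cap\Ker b_\delta$. By exactness of \eqref{eq:four-term} at $\F Y_\delta$ we have $\Ker b_\delta=\operatorname{Im} a_\delta$, and hence $U_0=U\cap\operatorname{Im}a_\delta$. There are then two short exact sequences of $\Lambda$-modules:
\[
 0\to U_0\to U\xto{b_\delta} b_\delta(U)\to 0,
 \qquad
 0\to K_\delta\to a_\delta^{-1}(U)\xto{a_\delta} U_0\to 0.
\]
For the second one, note that $K_\delta\subseteq a_\delta^{-1}(U)$ because $0\in U$, and that $a_\delta$ maps $a_\delta^{-1}(U)$ onto $U\cap\operatorname{Im}a_\delta$. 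Reading off classes gives $g=[U_0]+e$ and $f=[K_\delta]+[U_0]$. Subtracting the first equation from the second gives $f-g=[K_\delta]-e$, which is exactly the required identity.

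The reverse identity \eqref{eq:reverse-exponent} follows by the same argument applied to $L\xto{j_\eta}Y'_\eta\xto{q_\eta}M\overset{\eta}{\etri}$.
\begin{itemize}
\item The stable triangle gives the exact sequence $\F L\xto{a'_\eta}\F Y'_\eta\xto{b'_\eta}\F M\to G(\Sigma^2\pi L)$.
\item The same long exact sequence argument gives $\Ker a'_\eta\simeq\Coker H(q_\eta)$.
\item \cref{prop:index-defect} then gives $\ind_T(Y'_\eta)=\ind_T(L\oplus M)-\Phi([\Ker a'_\eta])$.
\item The linear algebra above, with the roles of $e$ and $f$ swapped, gives $e+f=g+[\Ker a'_\eta]$.
\end{itemize}
Note that the condition $[\eta]\in\mathcal R_V$ plays no role here, since the computation is pointwise.

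The only delicate step is the identification $\Ker a_\delta\simeq\Coker H(p_\delta)$, together with its reverse analogue. This is needed so that the index defect, which is stated in terms of $\Coker H(-)$ of the deflation, matches the kernel appearing in the $\F$-sequence. I would verify it from the long exact sequence
\[
 G(\pi Y_\delta)\xto{H(p_\delta)}G(\pi L)\to G(\Sigma\pi M)\xto{a_\delta}G(\Sigma\pi Y_\delta),
\]
which is obtained by applying $G$ to the rotated stable triangle. Everything else is bookkeeping in $K_0(\modu\Lambda)$. The frozen coordinates are automatically retained, because both \eqref{eq:index-from-stable-cone} and $\Phi$ take values in the full group $\Ksp(\add T)$.
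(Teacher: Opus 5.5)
Your proof is correct and is essentially the paper's argument. The paper uses the single exact sequence $0\to\Ker a_\delta\to a_\delta^{-1}(U)\to U\to b_\delta(U)\to 0$, which you split at $U_0=U\cap\operatorname{Im}a_\delta$, to get $[\Ker a_\delta]=e+f-g$, and then concludes with the index defect formula via the same identification $\Ker a_\delta\simeq\Coker H(p_\delta)$.
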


\begin{proof}
We only need to prove \eqref{eq:forward-exponent}.  Let
$\alpha=a_\delta|_{a_\delta^{-1}(U)}$, and exactness of
\eqref{eq:four-term} gives the exact sequence
\[
 0\longrightarrow\Ker\alpha\longrightarrow
 a_\delta^{-1}(U)\xto{\alpha}U
 \longrightarrow b_\delta(U)\longrightarrow0.
\]
Thus, in $K_0(\modu\Lambda)$, one obtains
\[
 [\Ker\alpha]=e+f-g.
\]
Since $0\in U$, one has $\Ker\alpha=\Ker a_\delta$. Exactness of the stable triangle identifies this kernel with the cokernel which appears when \cref{prop:index-defect} is applied to the $\E$-triangle \eqref{eq:realisation}.  Hence
\[
 \ind_T(M)-\ind_T(Y_\delta)+\ind_T(L)
 =\Phi(e+f-g).
\]
Since $\Phi$ is a group homomorphism, rearranging this equality gives \eqref{eq:forward-exponent}.
\end{proof}

\subsection{The refined multiplication formula}\label{sec:main}

We now prove the refined multiplication formula.

\begin{theorem}\label{thm:main}
Assume \cref{hyp:constructible-cones}.  Let $L,M\in\C$ and let
$0\neq V\subseteq\E(L,M)$ be a linear subspace.  Then
\begin{equation}\label{eq:main}
\begin{aligned}
 \chi(\mathbb PV)\X^T_L\X^T_M
 ={}&\sum_{Y\in\mathcal Y_{L,M}}
 \chi\bigl(\mathbb P(V\cap\E(L,M)_{\langle Y\rangle})\bigr)\X^T_Y\\
 &+\sum_{Y\in\mathcal Y_{M,L}}
 \chi\bigl(\mathcal R_V\cap
   \mathbb P\E(M,L)_{\langle Y\rangle}\bigr)\X^T_Y.
\end{aligned}
\end{equation}
\end{theorem}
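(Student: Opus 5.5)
The plan is to expand the left-hand side via \eqref{eq:lhs-expansion} and to split each $\mathcal B_{e,f}$ into the constructible pieces $\mathcal L^V_1(e,f)$ and $\mathcal L^V_2(e,f)$. This gives
\[
 \chi(\mathbb PV)\X^T_L\X^T_M
 =\sum_{e,f}\bigl(\chi(\mathcal L^V_1(e,f))+\chi(\mathcal L^V_2(e,f))\bigr)\bx^{\ind_T(L\oplus M)-\Phi(e+f)}.
\]
Constructibility of $\mathcal L^V_1(e,f)$ comes from \cref{prop:constructible-lift}(i): it is the image of a constructible set under the constructible map $\Psi_{e,f}$. Only finitely many pairs $(e,f)$ contribute, since $e\le\dim\F L$ and $f\le\dim\F M$.

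For the first part, I would apply \cref{lem:forward-fibres} to replace $\chi(\mathcal L^V_1(e,f))$ by $\sum_g\chi(\mathcal W^V_{L,M}(e,f,g))$. Then \cref{lem:exponent}, equation \eqref{eq:forward-exponent}, rewrites the monomial as $\bx^{\ind_T(Y_\delta)-\Phi(g)}$. To make this rigorous, I would stratify $\mathbb PV$ by the finite constructible decomposition of \cref{prop:stratification}. On the piece $\mathbb P(V\cap\E(L,M)_{\langle Y\rangle})$, the index $\ind_T(Y_\delta)=\ind_T(Y)$ is constant. Projecting the locus $\bigsqcup_{e,f}\mathcal W^V_{L,M}(e,f,g)$ restricted to this piece onto $[\delta]$ gives fibres $\Gr_g(\F Y_\delta)$. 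Here $(e,f)$ is determined by $U$, so summing over $e,f$ recovers the full quiver Grassmannian. Fubini for Euler characteristics, i.e.\ pushforward of constructible functions, together with the constancy of $\chi(\Gr_g(\F Y_\delta))=\chi(\Gr_g(\F Y))$ on the stratum, gives
\[
 \sum_{e,f,g}\chi\bigl(\mathcal W^V_{L,M}(e,f,g)\cap(\text{stratum }Y)\bigr)\bx^{\ind_T(Y)-\Phi(g)}
 =\chi\bigl(\mathbb P(V\cap\E(L,M)_{\langle Y\rangle})\bigr)\X^T_Y.
\]
The last equality uses \eqref{eq:character}. Summing over $Y\in\mathcal Y_{L,M}$ yields the first sum in \eqref{eq:main}.

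For the second part, I would use \cref{lem:complement} to replace $\chi(\mathcal L^V_2(e,f))$ by $\sum_g\chi(\mathcal W^{\mathcal R}_{M,L}(f,e,g))$. Equation \eqref{eq:reverse-exponent} then converts the monomial to $\bx^{\ind_T(Y'_\eta)-\Phi(g)}$. The same stratification argument applies, now to $\mathcal R_V\subseteq\mathbb P\E(M,L)$ using \cref{prop:stratification} for the pair $(M,L)$. The set $\mathcal R_V$ is constructible, being the complement of a closed linear subvariety. This produces $\sum_{Y\in\mathcal Y_{M,L}}\chi(\mathcal R_V\cap\mathbb P\E(M,L)_{\langle Y\rangle})\X^T_Y$.

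The main obstacle is the bookkeeping in the Fubini step. The sets $\mathcal W$ are defined with $e$ and $f$ fixed, while the fibres over $[\delta]$ must be whole quiver Grassmannians of $\F Y_\delta$. To handle this, I would check that the union over all $(e,f)$ of the fibres is exactly $\Gr_g(\F Y_\delta)$, and that exponents depend only on $g$ and the stratum, which is what \cref{lem:exponent} guarantees. The deep geometric input is contained in \cref{lem:complement}; the new ingredient here is merely that \cref{lem:exponent} holds in the full group $\Ksp(\add T)$, including frozen coordinates.
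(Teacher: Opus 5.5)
Your proposal is correct and follows the paper's proof essentially step for step. You expand via \eqref{eq:lhs-expansion}, split $\mathcal B_{e,f}=\mathcal L^V_1(e,f)\sqcup\mathcal L^V_2(e,f)$, treat the first part with \cref{lem:forward-fibres}, \eqref{eq:forward-exponent} and the stratification of \cref{prop:stratification} (summing over $(e,f)$ to recover $\Gr_g(\F Y)$), and treat the second part with \cref{lem:complement} and \eqref{eq:reverse-exponent}; your explicit Fubini/pushforward justification of the stratum-wise step only spells out what the paper states in one sentence.
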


\begin{proof}
Start with the expansion \eqref{eq:lhs-expansion}.  Since
\[
 \mathcal B_{e,f}=\mathcal L^V_1(e,f)\sqcup\mathcal L^V_2(e,f),
\]
additivity of Euler characteristic gives
\begin{align*}
 \chi(\mathbb PV)\X^T_L\X^T_M
 ={}&\bx^{\ind_T(L\oplus M)}
 \sum_{e,f}\chi\bigl(\mathcal L^V_1(e,f)\bigr)\bx^{-\Phi(e+f)}\\
 &+\bx^{\ind_T(L\oplus M)}
 \sum_{e,f}\chi\bigl(\mathcal L^V_2(e,f)\bigr)\bx^{-\Phi(e+f)}.
\end{align*}

By \cref{lem:forward-fibres}, the first term is
\[
 \sum_{e,f,g}\chi\bigl(\mathcal W^V_{L,M}(e,f,g)\bigr)
 \bx^{\ind_T(L\oplus M)-\Phi(e+f)}.
\]
Decompose each incidence variety according to the strata of middle terms of \cref{prop:stratification}.  On the stratum indexed by $Y$, \eqref{eq:forward-exponent} replaces the monomial by
\[
 \bx^{\ind_T(Y)-\Phi(g)}.
\]
For fixed $[\delta]$ and $U\subseteq\F Y_\delta$, the classes $e=[b_\delta(U)]$ and $f=[a_\delta^{-1}(U)]$ are uniquely determined.  Hence summing over $e$ and $f$ simply forgets these two labels and recovers the quiver Grassmannian of the middle module.  The first term is therefore
\[
 \sum_{Y\in\mathcal Y_{L,M}}
 \chi\bigl(\mathbb P(V\cap\E(L,M)_{\langle Y\rangle})\bigr)
 \bx^{\ind_T(Y)}
 \sum_g\chi\bigl(\Gr_g(\F Y)\bigr)\bx^{-\Phi(g)}.
\]
By \eqref{eq:character}, this is the first sum on the right side of \eqref{eq:main}.

For the second term, \cref{lem:complement} replaces
$\mathcal L^V_2(e,f)$ by the incidence varieties of reverse extensions in $\mathcal R_V$.  The reverse exponent identity \eqref{eq:reverse-exponent} then gives
\[
 \sum_{Y\in\mathcal Y_{M,L}}
 \chi\bigl(\mathcal R_V\cap
   \mathbb P\E(M,L)_{\langle Y\rangle}\bigr)
 \bx^{\ind_T(Y)}
 \sum_g\chi\bigl(\Gr_g(\F Y)\bigr)\bx^{-\Phi(g)}.
\]
This is the second sum in \eqref{eq:main}.
\end{proof}

\begin{remark}\label{rem:no-extra-term}
The proof of \cref{thm:main} uses constructibility only for the cylinders attached to the two fixed directions $(L,M)$ and $(M,L)$.  Thus the global \cref{hyp:constructible-cones} may be weakened, for this theorem, to the corresponding pairwise constructibility condition.  We keep the global hypothesis in the main statements because it is the standard form used in the applications below.

There is no universal extra summand attached to the projective--injective objects in \eqref{eq:main}.  Instead, every actual middle term in $\C$ is used.  If
$Y=Y_0\oplus P$ with $P$ projective--injective, then
\[
 \X^T_Y=\X^T_{Y_0}\X^T_P,
\]
so the frozen monomial is already present in its correct stratum.
\end{remark}

Taking $V=\E(L,M)$, \cref{thm:main} gives the following form of Palu's formula with coefficients retained.

\begin{corollary}\label{cor:Palu}
Assume \cref{hyp:constructible-cones}.  Then
\begin{align*}
 \chi\bigl(\mathbb P\E(L,M)\bigr)\X^T_L\X^T_M
={}&\int_{\mathbb P\E(L,M)}\X^T_{Y_\delta}\,d\chi
 +\int_{\mathbb P\E(M,L)}\X^T_{Y'_\eta}\,d\chi.
\end{align*}
\end{corollary}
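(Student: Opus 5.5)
The corollary is the special case $V=\E(L,M)$ of \cref{thm:main}, rewritten with the Euler integrals defined above. If $\E(L,M)=0$, then $\E(M,L)=0$ by $2$-Calabi--Yau duality. In that case both projective spaces are empty under the convention $\mathbb P0=\varnothing$, so both sides vanish and there is nothing to prove. We may therefore assume $V=\E(L,M)\neq0$ and apply \cref{thm:main}.

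The first step identifies the orthogonal. Since $\beta_{L,M}$ is nondegenerate, an $\eta\in\E(M,L)$ with $\beta_{L,M}(v,\eta)=0$ for all $v\in\E(L,M)$ must vanish. Hence $V^\perp=0$, so $\mathbb P(V^\perp)=\varnothing$ and $\mathcal R_V=\mathbb P\E(M,L)$. Also $V\cap\E(L,M)_{\langle Y\rangle}=\E(L,M)_{\langle Y\rangle}$.

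The second step converts the two sums into integrals. For the first, take $S=\mathbb P\E(L,M)$ in the identity following \cref{prop:stratification}:
\[
 \int_{\mathbb P\E(L,M)}\X^T_{Y_\delta}\,d\chi
 =\sum_{Y\in\mathcal Y_{L,M}}\chi\bigl(\mathbb P\E(L,M)_{\langle Y\rangle}\bigr)\X^T_Y .
\]
This is legitimate because \cref{prop:stratification} gives a finite constructible stratification. Its strata are stable under nonzero scalars, so they descend to constructible subsets of $\mathbb P\E(L,M)$. On each stratum the integrand $[\delta]\mapsto\X^T_{Y_\delta}$ is constant, since $Y\sim_TY'$ implies $\X^T_Y=\X^T_{Y'}$.

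The same argument with the roles of $L$ and $M$ exchanged handles the second sum. \cref{hyp:constructible-cones} covers the pair $(M,L)$ as well, so \cref{prop:stratification} applies to $\E(M,L)$. This identifies the second sum of \eqref{eq:main} with $\int_{\mathbb P\E(M,L)}\X^T_{Y'_\eta}\,d\chi$.

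The argument has no real obstacle. The one point needing care is that the integrand $[\eta]\mapsto\X^T_{Y'_\eta}$ is well defined on projective classes. This holds because nonzero scalar multiples of an extension have isomorphic middle terms.
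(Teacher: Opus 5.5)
Your proposal is correct and follows the paper's proof. You handle $\E(L,M)=0$ via $2$-Calabi--Yau duality and the convention $\mathbb P0=\varnothing$, and otherwise specialise \cref{thm:main} to $V=\E(L,M)$, using nondegeneracy to get $V^\perp=0$ and $\mathcal R_V=\mathbb P\E(M,L)$; your explicit conversion of the two sums into Euler integrals, via the identity following \cref{prop:stratification} applied to both $(L,M)$ and $(M,L)$, spells out a step the paper leaves implicit.
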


\begin{proof}
If $\E(L,M)=0$, then $\E(M,L)=0$ by $2$-Calabi--Yau duality, and both sides vanish by the convention $\mathbb P0=\varnothing$.  Otherwise take $V=\E(L,M)$ in \cref{thm:main}.  Nondegeneracy of the pairing gives $V^\perp=0$, and hence $\mathcal R_V=\mathbb P\E(M,L)$.
\end{proof}

Note that $\chi(\mathbb P U)=\dim_{\kk}U$ for a nonzero complex vector space $U$.  The one-dimensional exchange relation of Wang, Wei and Zhang \cite[Theorem~4.4]{WangWeiZhangCC} is now an immediate specialisation of the main theorem.

\begin{corollary}\label{cor:exchange}
Suppose that $\dim_{\kk}\E(L,M)=1$.  Let
\[
 M\longrightarrow Y\longrightarrow L\etri,
\hspace{2.5mm}
 L\longrightarrow Y'\longrightarrow M\etri
\]
be the two nonsplit $\E$-triangles.  Then
\[
 \X^T_L\X^T_M=\X^T_Y+\X^T_{Y'}.
\]
\end{corollary}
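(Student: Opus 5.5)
The plan is to specialise \cref{cor:Palu}, which is \cref{thm:main} with $V=\E(L,M)$. In this case no appeal to \cref{hyp:constructible-cones} is needed, because \cref{rem:one-dimensional} shows that the constructibility required for the pair $(L,M)$ is automatic when $\dim_{\kk}\E(L,M)=1$. First, $2$-Calabi--Yau duality \eqref{eq:2cy} gives $\dim_{\kk}\E(M,L)=1$. Hence $\mathbb P\E(L,M)$ and $\mathbb P\E(M,L)$ are single points, and the left-hand side becomes $\chi(\text{pt})\,\X^T_L\X^T_M=\X^T_L\X^T_M$.

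Next we evaluate the two integrals. The point of $\mathbb P\E(L,M)$ is the class $[\delta]$ of a nonzero extension $\delta$. Every nonzero element of $\E(L,M)$ is a nonzero scalar multiple of $\delta$. A realisation of $\lambda\delta$ is obtained from one of $\delta$ by rescaling a structural map, so its middle term is isomorphic to $Y_\delta$. Any two nonsplit $\E$-triangles $M\to Y\to L\etri$ then have isomorphic middle terms. In particular the given $Y$ satisfies $Y\cong Y_\delta$, and therefore $Y_\delta\sim_T Y$. By the definition of the Euler integral over a point, the first integral equals $\X^T_{Y_\delta}=\X^T_Y$. The same argument for the unique point of $\mathbb P\E(M,L)$ shows that the second integral equals $\X^T_{Y'}$.

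Combining the two steps gives $\X^T_L\X^T_M=\X^T_Y+\X^T_{Y'}$. The only point that needs care is the well-definedness of the middle term under scalar change. This is the observation already used in the definition of $\mathcal W^V_{L,M}$ and in \cref{rem:one-dimensional}, and it also follows from the fact that $\delta$ and $\lambda\delta$ give isomorphic $\E$-triangles via the automorphism $\lambda^{-1}$ on $M$. The rest is formal.
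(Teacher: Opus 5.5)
Your proposal is correct and is essentially the paper's argument: the paper takes $V=\E(L,M)$ in \cref{thm:main}, notes that both projectivised extension spaces are single points and that $V^{\perp}=0$, which is exactly your specialisation through \cref{cor:Palu}. Your two supporting points, that nonzero scalar multiples of an extension have isomorphic middle terms and that \cref{hyp:constructible-cones} is automatic for this pair, are the same justifications the paper records in \cref{rem:one-dimensional}.
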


\begin{proof}
Take $V=\E(L,M)$ in \cref{thm:main}. Both projective extension spaces consist of one point, and $V^\perp=0$.
\end{proof}

Set $x_{r+1}=\cdots=x_n=1$.  By the comparison theorem of Wang, Wei and Zhang \cite[Proposition~4.5]{WangWeiZhangCC}, the specialised character of $M$ is Palu's character of $\Sigma\pi M$ with respect to $T'$.  Projective--injective summands disappear, and shifting the stable triangles shows that \cref{thm:main} becomes the refined formula of Keller, Plamondon and Qin in $\A$.

\begin{corollary}\label{cor:stable}
After frozen specialisation, \eqref{eq:main} is the refined multiplication formula for the Palu character in the stable $2$-Calabi--Yau triangulated category $\A$.
\end{corollary}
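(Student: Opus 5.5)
The plan is to set $x_{r+1}=\cdots=x_n=1$ in both sides of \eqref{eq:main} and identify every specialised character with Palu's character in $\A$, using \cite[Proposition~4.5]{WangWeiZhangCC}. That comparison says that the specialised $\X^T_M$ equals Palu's $X^{T'}_{\Sigma\pi M}$. On the level of exponents this is the following concrete fact. The projection $\Ksp(\add T)\to\Ksp(\add T')$ forgets the coordinates of $T_{r+1},\dots,T_n$. Under this projection, $\ind_T(M)$ maps to the Palu index of $\Sigma\pi M$ with respect to $T'$, up to the usual shift convention, and $\Phi$ maps to the bilinear-form exponent of Palu's character. The Grassmannian factor $\chi(\Gr_e(\F M))$ already lives in $\A$, since $\F M=G(\Sigma\pi M)$. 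So the specialised left side of \eqref{eq:main} is $\chi(\mathbb PV)X^{T'}_{\Sigma\pi L}X^{T'}_{\Sigma\pi M}$.

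Next I would transport the geometry on the right side to $\A$. Via $\vartheta_{L,M}$, the subspace $V$ becomes a subspace $V'\subseteq\Hom_\A(\pi L,\Sigma\pi M)$. As already used in the proof of \cref{lem:complement}, $\vartheta$ is compatible with the $2$-Calabi--Yau pairings, so $\mathcal R_V$ corresponds to the reverse locus of Keller--Plamondon--Qin. For $\delta\in V$, the realisation \eqref{eq:realisation} maps by \cite[Lemma~3.1]{WangWeiZhangCC} to the stable triangle $\pi M\to\pi Y_\delta\to\pi L\to\Sigma\pi M$. Applying $\Sigma$ and rotating signs appropriately, we get a triangle $\Sigma\pi M\to\Sigma\pi Y_\delta\to\Sigma\pi L\to\Sigma^2\pi M$. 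Its middle term is $\Sigma\pi Y_\delta$, and its class is $\Sigma\vartheta(\delta)$ up to a sign, which is harmless projectively. Projective--injective summands of $Y_\delta$ vanish under $\pi$, and their frozen monomials become $1$. Hence the specialised $\X^T_{Y_\delta}$ is Palu's character of the cone.

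It remains to regroup the strata. The strata $\E(L,M)_{\langle Y\rangle}$ refine the coarser partition by the specialised value $X^{T'}_{\Sigma\pi Y}$. I would rewrite each sum on the right as an Euler integral, $\int_{\mathbb PV}\X^T_{Y_\delta}|_{x_{>r}=1}\,d\chi$ and its reverse analogue. By additivity over a common refinement, as in \cref{rem:exact-comparison}, these integrals agree with the Keller--Plamondon--Qin integrals over $\mathbb PV'$ and over their reverse locus. The constructibility needed is exactly \cref{hyp:constructible-cones}.

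I expect the main obstacle to be the bookkeeping of shifts and indices. The identification $\X^T_M\mapsto X^{T'}_{\Sigma\pi M}$ involves $\Sigma$, so I must check two things: that the subspace $\Sigma V'$ of $\Hom_\A(\Sigma\pi L,\Sigma^2\pi M)$ is the one appearing in the stable formula, and that $\Sigma$ preserves the pairing up to sign, so that perpendiculars match. The first thing I would try is to note that $\Sigma$ is a triangle autoequivalence commuting with the Serre functor, so it carries the whole KPQ incidence setup for $(\pi L,\pi M,V')$ isomorphically onto the setup for $(\Sigma\pi L,\Sigma\pi M,\Sigma V')$. A sign in the pairing does not affect $V^\perp$.
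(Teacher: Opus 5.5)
Your proposal is correct and follows the same route as the paper's argument in the paragraph just before the corollary: specialise the frozen variables, identify the specialised $\X^T_M$ with Palu's character of $\Sigma\pi M$ via \cite[Proposition~4.5]{WangWeiZhangCC}, note that projective--injective summands of middle terms disappear, and shift the stable triangles by $\Sigma$ to reach the Keller--Plamondon--Qin formula for $(\Sigma\pi L,\Sigma\pi M)$ with subspace $\Sigma\vartheta_{L,M}(V)$. Your common-refinement and pairing-up-to-sign checks only make explicit what the paper leaves implicit.

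One small imprecision: forgetting the frozen coordinates sends $\ind_T(M)$ to $\ind_{T'}(\pi M)$, not to an index of $\Sigma\pi M$. In Palu's normalisation this equals $-\operatorname{coind}_{T'}(\Sigma\pi M)$. This is harmless, since you rely on the cited comparison theorem for that identification.
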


\begin{lemma}\label{lem:stable-lift}
Let $X,Y$ be indecomposable non-projective objects of $\C$.  If $\pi X\simeq\pi Y$ in $\A$, then $X\simeq Y$ in $\C$.  In particular, every automorphism of $\pi X$ in the stable category admits a lift which is an automorphism of $X$.
\end{lemma}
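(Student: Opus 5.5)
Fix a stable isomorphism and lift it along the quotient functor. Let $u\colon\pi X\to\pi Y$ and $v\colon\pi Y\to\pi X$ be mutually inverse isomorphisms in $\A$. Since $\pi$ is full, choose lifts $\tilde u\colon X\to Y$ and $\tilde v\colon Y\to X$ in $\C$. Then $\pi(\tilde v\tilde u)=\mathrm{id}_{\pi X}$, so
\[
 \tilde v\tilde u=\mathrm{id}_X-\varphi,
\]
where $\varphi\in\End_{\C}(X)$ factors through an object of $\Pcal$. Symmetrically, $\tilde u\tilde v=\mathrm{id}_Y-\psi$ with $\psi$ factoring through $\Pcal$.

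The key step is to show that $\varphi$ lies in the radical of the local ring $\End_{\C}(X)$; the local property holds because $X$ is indecomposable and $\C$ is Krull--Schmidt. Suppose instead that $\varphi$ is invertible. Write $\varphi=\beta\alpha$ with $\alpha\colon X\to P$, $\beta\colon P\to X$ and $P\in\Pcal$. Then $\varphi^{-1}\beta\colon P\to X$ satisfies $(\varphi^{-1}\beta)\alpha=\mathrm{id}_X$, so $X$ is a direct summand of $P$. Summands of projective objects are projective, since $\E(X,-)$ is a summand of $\E(P,-)=0$. This contradicts the assumption that $X$ is non-projective. Hence $\varphi$ is not invertible, so it lies in the radical and $\mathrm{id}_X-\varphi$ is invertible. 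The same argument applies to $\psi$, since $Y$ is also indecomposable and non-projective.

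It follows that $\tilde v\tilde u$ and $\tilde u\tilde v$ are both invertible. Therefore $\tilde u$ has a left inverse $(\tilde v\tilde u)^{-1}\tilde v$ and a right inverse $\tilde v(\tilde u\tilde v)^{-1}$. These must coincide, so $\tilde u$ is an isomorphism $X\simeq Y$. For the second statement, take $Y=X$ and let $u$ be an automorphism of $\pi X$. The argument above shows that any lift $\tilde u$ of $u$ is already an automorphism of $X$.

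The only delicate point is the radical argument: one must check that an endomorphism factoring through $\Pcal$ cannot be invertible for an indecomposable non-projective object. This is where both hypotheses are used. Indecomposability gives locality of $\End_{\C}(X)$, and non-projectivity rules out a split factorisation through $P$. Without these hypotheses the statement fails, for example for $X$ and $X\oplus P$.
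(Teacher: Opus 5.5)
Your proof is correct and takes the same approach as the paper. You lift mutually inverse stable isomorphisms and show that an endomorphism factoring through $\Pcal$ cannot be invertible on an indecomposable non-projective object, so it lies in the radical of the local endomorphism ring; hence $\tilde v\tilde u$ and $\tilde u\tilde v$ are invertible, and the automorphism statement follows by lifting the stable automorphism together with its inverse.
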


\begin{proof}
Choose morphisms $f\colon X\to Y$ and $g\colon Y\to X$ whose stable classes are mutually inverse.  Then $gf-1_X$ and $fg-1_Y$ factor through projective--injective objects.  Such an endomorphism of an indecomposable non-projective object cannot be invertible: if $u\colon X\to X$ factors through $P\in\Pcal$ and is invertible, then $1_X=u^{-1}u$ factors through $P$, so $X$ is a direct summand of $P$ and hence projective, a contradiction.  Since $\End_{\C}(X)$ and $\End_{\C}(Y)$ are local, these endomorphisms lie in the respective Jacobson radicals.  Consequently $gf$ and $fg$ are invertible, and therefore $f$ is an isomorphism.

For the last assertion, choose a lift $a\colon X\to X$ of a stable automorphism and a lift $b$ of its stable inverse.  The same argument shows that $ba$ and $ab$ are invertible, hence so is $a$.
\end{proof}

For $\delta\in\E(C,A)$, a morphism $a\colon A\to A'$ induces
\[
 a_*\delta:=\E(C,a)(\delta)\in\E(C,A'),
\]
and a morphism $c\colon C'\to C$ induces
\[
 c^*\delta:=\E(c,A)(\delta)\in\E(C',A).
\]
A nonzero extension $\delta\in\E(C,A)$ is \emph{almost split} if
$a_*\delta=0$ for every morphism $a\colon A\to A'$ that is not a section and
$c^*\delta=0$ for every morphism $c\colon C'\to C$ that is not a retraction.  An
$\E$-triangle realising such an extension is called an Auslander--Reiten
$\E$-triangle.  For an indecomposable non-projective object $Z$, its first
term is denoted by $\tau Z$.  These notions and the existence criterion via
Auslander--Reiten--Serre duality are due to Iyama, Nakaoka and Palu \cite[Definition~3.1 and
Theorem~3.6]{IyamaNakaokaPalu}.  In the present setting the $2$-Calabi--Yau
duality supplies that duality.  On the stable category one has
$\pi(\tau Z)\simeq\Sigma\pi Z$.

Choose the representative $\tau Z$ without projective--injective direct summands and fix an isomorphism
\[
 \xi\colon\pi(\tau Z)\xrightarrow{\sim}\Sigma\pi Z.
\]
Under the canonical isomorphism
\[
 \E(\tau Z,Z)\xrightarrow{\sim}
 \Hom_{\A}(\pi(\tau Z),\Sigma\pi Z),
\]
let $\delta_0$ be the extension corresponding to $\xi$, and choose a realisation
\[
 Z\xrightarrow{i_Z}I_Z\longrightarrow\tau Z
 \overset{\delta_0}{\etri}.
\]
Its image in the stable category has connecting morphism $\xi$, hence is isomorphic to the standard suspension triangle
\[
 \pi Z\longrightarrow0\longrightarrow\pi(\tau Z)
       \xrightarrow{\xi}\Sigma\pi Z.
\]
Therefore $\pi I_Z=0$, so $I_Z\in\Pcal$.  Since $i_Z$ is an inflation and every object of $\Pcal$ is injective, $i_Z$ is a left $\Pcal$-approximation.

It is left minimal.  Indeed, suppose that $s\in\End_{\C}(I_Z)$ satisfies $s i_Z=i_Z$.  Axiom \textup{(ET3)} completes $(1_Z,s)$ to a morphism of the above $\E$-triangle to itself, with some endomorphism $t$ of $\tau Z$.  Passing to the stable category and using the last square gives
\[
 \xi\,\pi(t)=\xi,
\]
so $\pi(t)=1_{\pi(\tau Z)}$.  Thus $t-1_{\tau Z}$ factors through a projective--injective object. Since $\tau Z$ is indecomposable and non-projective, the argument for local rings in \cref{lem:stable-lift} shows that $t$ is an automorphism.  In a morphism of $\E$-triangles, if the maps on the first and third terms are isomorphisms, then so is the map on the middle term. Hence $s$ is an automorphism.  Thus $i_Z$ is a minimal left $\Pcal$-approximation.

We therefore fix an Auslander--Reiten $\E$-triangle and this minimal companion cosyzygy $\E$-triangle
\begin{equation}\label{eq:AR-and-cosy}
 \tau Z\longrightarrow Y\longrightarrow Z\overset{\delta_{\mathrm{AR}}}{\etri},
 \hspace{2.5mm}
 Z\longrightarrow I_Z\longrightarrow\tau Z\overset{\delta_0}{\etri},
 \hspace{2.5mm}I_Z\in\Pcal.
\end{equation}
Notice that $\delta_0\ne0$ whenever $Z\ne0$ in $\A$, because its stable connecting morphism $\xi$ is an isomorphism.

\begin{lemma}[Keller--Plamondon--Qin \cite{KellerPlamondonQin}]\label{known:AR-locus}
Let $V=\kk\delta_{\mathrm{AR}}\subseteq\E(Z,\tau Z)$, and let
\[
 \xi\colon\pi(\tau Z)\xrightarrow{\sim}\Sigma\pi Z
\]
be the connecting isomorphism of the companion cosyzygy triangle.  Via
\[
 \E(\tau Z,Z)
 \simeq\Hom_{\A}(\pi(\tau Z),\Sigma\pi Z)
 \xrightarrow{\ \xi^{-1}\circ-\ }
 \End_{\A}(\pi(\tau Z)),
\]
one has
\[
 \mathcal R_V
 \cong
 \mathbb P\End_{\A}(\pi(\tau Z))
 \setminus
 \mathbb P\operatorname{rad}\End_{\A}(\pi(\tau Z)).
\]
This locus is an affine space, hence has Euler characteristic one, and every class in it has an invertible connecting morphism $\pi(\tau Z)\to\Sigma\pi Z$.
\end{lemma}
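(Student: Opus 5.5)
The plan is to move the whole computation into the stable category and use the almost split property of $\delta_{\mathrm{AR}}$ in the form given by $2$-Calabi--Yau (Serre) duality. Write $\vartheta$ for the canonical isomorphisms $\E(-,-)\cong\Hom_{\A}(\pi-,\Sigma\pi-)$ used in \cref{lem:complement}. Put $E=\End_{\A}(\pi(\tau Z))$ and let $\psi\colon\E(\tau Z,Z)\to E$ be the composite $\eta\mapsto\xi^{-1}\circ\vartheta(\eta)$. This map $\psi$ is a linear isomorphism, and it sends $\delta_0$ to $1$. Since $V$ is the line spanned by $\delta_{\mathrm{AR}}$, we have $\mathcal R_V=\{[\eta]\mid\beta(\delta_{\mathrm{AR}},\eta)\neq0\}$. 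So the task is to identify the linear form $\lambda=\beta(\delta_{\mathrm{AR}},\psi^{-1}(-))$ on $E$.

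\textbf{Step 1.} Bifunctoriality of $\beta$ (equivalently of \eqref{eq:2cy}) gives, for $u\in\End_{\C}(\tau Z)$,
\[
\beta(\delta_{\mathrm{AR}},u^*\delta_0)=\beta(u_*\delta_{\mathrm{AR}},\delta_0).
\]
Here $u_*\delta_{\mathrm{AR}}$ uses covariance in the second variable of $\E(Z,\tau Z)$. Under $\psi$, the pullback $u^*\delta_0$ corresponds to $\xi\circ\pi(u)$, hence to $\pi(u)\in E$. Every element of $E$ has this form. So $\lambda(\pi(u))=\beta(u_*\delta_{\mathrm{AR}},\delta_0)$.

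\textbf{Step 2.} Show that $\lambda$ vanishes exactly on the radical in a precise sense.
\begin{itemize}
\item If $\pi(u)\in\operatorname{rad}E$, then $u$ is not an automorphism. Since $\tau Z$ is indecomposable, $u$ is not a section, so $u_*\delta_{\mathrm{AR}}=0$ by the almost split property. Hence $\lambda(\pi(u))=0$.
\item Lifts are controlled by \cref{lem:stable-lift} and its local-ring argument. $E$ is local because $\pi(\tau Z)$ is indecomposable: $\tau Z$ is indecomposable and non-projective. A class in $E$ is invertible iff some (equivalently every) lift $u$ is an automorphism.
\item $\lambda(1)=\beta(\delta_{\mathrm{AR}},\delta_0)\neq0$. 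Otherwise $\lambda$ kills both $1$ and $\operatorname{rad}E$, hence all of $E=\kk1\oplus\operatorname{rad}E$. This uses that the residue field of $E$ is $\kk$, since $\kk$ is algebraically closed and $E$ is finite-dimensional. But $\lambda$ is nonzero by nondegeneracy of $\beta$, because $\delta_{\mathrm{AR}}\ne0$.
\end{itemize}
Therefore $\ker\lambda=\operatorname{rad}E$, and $\psi$ induces
\[
\mathcal R_V\cong\mathbb PE\setminus\mathbb P\operatorname{rad}E .
\]

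\textbf{Step 3.} Since $\operatorname{rad}E$ is a hyperplane, the complement $\mathbb PE\setminus\mathbb P\operatorname{rad}E$ is an affine space of dimension $\dim E-1$. Hence its Euler characteristic is $1$. A point $[c\cdot1+r]$ with $c\neq0$ and $r\in\operatorname{rad}E$ is a unit of $E$. Its connecting morphism $\xi\circ(c+r)$ is then an isomorphism $\pi(\tau Z)\to\Sigma\pi Z$.

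\textbf{Main obstacle.} The delicate point is Step 1. It requires that the bifunctorial pairing satisfies the stated adjointness $\beta(\delta,u^*\eta)=\beta(u_*\delta,\eta)$ with consistent variance conventions. It also requires that $\vartheta$ intertwines $u^*$ with precomposition by $\pi(u)$. I would check the second claim directly from naturality of the canonical isomorphism recalled in \cref{sec:background}. The first I would derive by writing $\beta$ as evaluation through the isomorphism $\E(X,Y)\cong D\E(Y,X)$ and unwinding its naturality in $Y$.
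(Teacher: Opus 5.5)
Your proposal is correct and is essentially the radical-orthogonality argument that the paper defers to Keller--Plamondon--Qin. You transport $\beta(\delta_{\mathrm{AR}},-)$ to a linear form on $\End_{\A}(\pi(\tau Z))$, show that it kills the radical but not the identity, and conclude that $\mathcal R_V$ is the complement of a projective hyperplane consisting of units.

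The only difference is that you derive the vanishing on the radical directly in $\mathcal C$, from the almost split property and the bifunctoriality $\beta(\delta,u^*\eta)=\beta(u_*\delta,\eta)$. The cited argument instead uses the Serre-duality description of the stable Auslander--Reiten triangle. Your route avoids having to check that the stable image of the $\mathbb E$-triangle is almost split in $\A$.
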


The statement and its radical orthogonality proof are contained in Keller, Plamondon and Qin \cite[Section~4.3, proof of Theorem~4.8, pp.~14--15]{KellerPlamondonQin}.

\begin{lemma}\label{lem:AR-loci}
For every $[\eta]\in\mathcal R_V$, the middle term of a realisation
\[
 Z\longrightarrow Y'_\eta\longrightarrow\tau Z\overset{\eta}{\etri}
\]
is isomorphic to $I_Z$.  Consequently,
\[
 \int_{\mathcal R_V}\X^T_{Y'_\eta}\,d\chi=\X^T_{I_Z}.
\]
\end{lemma}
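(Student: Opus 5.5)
For $[\eta]\in\mathcal R_V$, \cref{known:AR-locus} shows that the stable connecting morphism $\phi=\vartheta(\eta)\colon\pi(\tau Z)\to\Sigma\pi Z$ is invertible. Indeed, under the identification there, $\xi^{-1}\circ\phi$ is a non-radical element of the local ring $\End_{\A}(\pi(\tau Z))$, so it is an automorphism $u$, and $\phi=\xi u$. The plan is to use $u$ to compare the realisation of $\eta$ with the companion cosyzygy triangle, and then lift the comparison from $\A$ to $\C$.

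Since $\pi(\tau Z)$ is indecomposable, \cref{lem:stable-lift} lets us lift the stable automorphism $u$ to an automorphism $\tilde u$ of $\tau Z$ in $\C$. Put $\eta'=\tilde u^*\delta_0\in\E(\tau Z,Z)$. Because $\vartheta$ is natural in the first variable, $\vartheta(\eta')=\xi\circ\pi(\tilde u)=\xi u=\phi=\vartheta(\eta)$. The map $\vartheta_{\tau Z,Z}$ is a linear isomorphism, so $\eta=\eta'=\tilde u^*\delta_0$.

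Next, pulling back along an isomorphism does not change the middle term. If $Z\to I_Z\xrightarrow{p}\tau Z\overset{\delta_0}{\etri}$ realises $\delta_0$, then $Z\to I_Z\xrightarrow{\tilde u^{-1}p}\tau Z$ realises $\tilde u^*\delta_0$. This follows from the standard compatibility of the realisation $\mathfrak s$ with isomorphisms of the end terms, or equivalently from (ET3) together with the fact that a morphism of $\E$-triangles with invertible outer maps has invertible middle map. Since realisations of the same extension have isomorphic middle terms, $Y'_\eta\simeq I_Z$.

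For the consequence, the function $[\eta]\mapsto\X^T_{Y'_\eta}$ is therefore constant on $\mathcal R_V$ with value $\X^T_{I_Z}$. So $\mathcal R_V$ lies in a single stratum of \cref{prop:stratification}, and the integral equals $\chi(\mathcal R_V)\,\X^T_{I_Z}$. By \cref{known:AR-locus}, $\chi(\mathcal R_V)=1$.

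The main obstacle is ensuring that the identification in \cref{known:AR-locus} really gives $\phi=\xi u$ with $u$ invertible, rather than only that $\phi$ is invertible. If only invertibility of $\phi$ is available, take $u=\xi^{-1}\phi$ directly; the argument above goes through unchanged. The remaining steps are formal consequences of naturality and \cref{lem:stable-lift}.
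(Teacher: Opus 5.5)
Your proposal is correct and follows the paper's proof essentially step for step. You write the connecting morphism as $\xi$ composed with a stable automorphism, lift that automorphism to $\tau Z$ via \cref{lem:stable-lift} (which applies because $\tau Z$ is indecomposable and non-projective in $\C$, rather than because $\pi(\tau Z)$ is indecomposable), identify $\eta$ with $\tilde u^*\delta_0$ by injectivity of the canonical isomorphism, and conclude using $\chi(\mathcal R_V)=1$; your explicit realisation $Z\to I_Z\xrightarrow{\tilde u^{-1}p}\tau Z$ of the pullback just spells out a step the paper asserts without detail.
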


\begin{proof}
This is the step which retains the coefficient information and is absent after passage to the stable category.  Let
\[
 \xi\colon\pi(\tau Z)\xrightarrow{\sim}\Sigma\pi Z
\]
be the connecting isomorphism of the second $\E$-triangle in \eqref{eq:AR-and-cosy}.  By \cref{known:AR-locus}, the connecting morphism $\overline\eta\colon\pi(\tau Z)\to\Sigma\pi Z$ corresponding to $\eta$ is invertible.  Hence
\[
 \alpha=\xi^{-1}\overline\eta\in\Aut_{\A}(\pi(\tau Z)).
\]
By \cref{lem:stable-lift}, choose an actual automorphism $a\colon\tau Z\to\tau Z$ whose stable class is $\alpha$.  Pulling the companion cosyzygy $\E$-triangle back along $a$ produces the extension $a^*\delta_0$, where $\delta_0\in\E(\tau Z,Z)$ is its original extension class, and its stable connecting morphism is $\xi\,\pi(a)=\xi\alpha=\overline\eta$.  The natural isomorphism
\[
 \E(\tau Z,Z)\simeq\Hom_{\A}(\pi(\tau Z),\Sigma\pi Z)
\]
is injective, so $a^*\delta_0=\eta$.  Since $a$ is an automorphism, the pullback middle term is isomorphic to $I_Z$.  Thus every realisation of $\eta$ has middle term isomorphic to $I_Z$, and therefore $Y'_\eta\cong I_Z$.  The character is constant on $\mathcal R_V$, and the last assertion follows from $\chi(\mathcal R_V)=1$.
\end{proof}

\begin{corollary}[Explicit coefficient term in an Auslander--Reiten mesh]\label{cor:AR}
Assume \cref{hyp:constructible-cones}.  For the triangles in \eqref{eq:AR-and-cosy}, set
\[
 \varepsilon_Z
 :=\ind_T Z+\ind_T(\tau Z)-\Phi([H(\tau Z)])
 \in K_0^{\mathrm{sp}}(\add T).
\]
Then
\begin{equation}\label{eq:AR-frozen-class}
 \varepsilon_Z=[I_Z]
 \in \bigoplus_{i=r+1}^{n}\mathbb Z_{\ge0}[T_i].
\end{equation}
In particular,
\begin{equation}\label{eq:AR-explicit}
 \X^T_{\tau Z}\X^T_Z
 =\X^T_Y+\bx^{\varepsilon_Z}.
\end{equation}
Equivalently, if $I_Z\cong\bigoplus_{i=r+1}^{n}T_i^{m_i(Z)}$, then
\[
 \varepsilon_Z=\sum_{i=r+1}^{n}m_i(Z)[T_i],
 \hspace{2.5mm}
 \bx^{\varepsilon_Z}=\prod_{i=r+1}^{n}x_i^{m_i(Z)}.
\]
\end{corollary}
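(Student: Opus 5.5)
The plan has two parts: first identify $\varepsilon_Z$ with $[I_Z]$ using the index defect formula, then obtain the mesh relation from \cref{thm:main} with the Auslander--Reiten line as the subspace.

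\textbf{Step 1: $\varepsilon_Z=[I_Z]$.} Apply \cref{prop:index-defect} to the companion cosyzygy $\E$-triangle
\[
 Z\xto{i_Z}I_Z\xto{p}\tau Z\overset{\delta_0}{\etri}
\]
from \eqref{eq:AR-and-cosy}. This gives
\[
 \ind_T(Z)-\ind_T(I_Z)+\ind_T(\tau Z)=\Phi([\Coker H(p)]).
\]
Since $I_Z\in\Pcal$, we have $\pi I_Z=0$. Hence $H(I_Z)=0$ and $\Coker H(p)=H(\tau Z)$.

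Since $I_Z\in\Pcal\subseteq\add T$, we may take the trivial presentation $0\to I_Z\to I_Z$ in \eqref{eq:T-presentation}. This gives $\ind_T(I_Z)=[I_Z]$. Rearranging then yields $\varepsilon_Z=[I_Z]$.

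Because $I_Z$ is projective--injective, it is a sum of copies of $T_{r+1},\dots,T_n$. So $[I_Z]$ lies in $\bigoplus_{i>r}\mathbb Z_{\ge0}[T_i]$, and the multiplicities $m_i(Z)$ are exactly those of $I_Z$. This proves \eqref{eq:AR-frozen-class} and the final equivalent description.

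\textbf{Step 2: the mesh relation.} Apply \cref{thm:main} with $L=Z$, $M=\tau Z$ and $V=\kk\delta_{\mathrm{AR}}\subseteq\E(Z,\tau Z)$.

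On the left-hand side, $\mathbb PV$ is a point, so the left side is $\X^T_{Z}\X^T_{\tau Z}$.

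For the first sum on the right, the only class in $\mathbb PV$ is $[\delta_{\mathrm{AR}}]$. Its middle term is $Y$, because rescaling an extension by a nonzero scalar does not change the middle term up to isomorphism. So the first sum equals $\X^T_Y$.

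For the second sum, \cref{lem:AR-loci} shows that every $[\eta]\in\mathcal R_V$ has middle term isomorphic to $I_Z$. It also shows $\chi(\mathcal R_V)=1$. Rewriting the second sum as the integral $\int_{\mathcal R_V}\X^T_{Y'_\eta}\,d\chi$ therefore gives $\X^T_{I_Z}$.

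Finally, since $I_Z\cong\bigoplus_{i>r}T_i^{m_i(Z)}$, we have $\X^T_{I_Z}=\prod_{i>r}x_i^{m_i(Z)}=\bx^{[I_Z]}=\bx^{\varepsilon_Z}$. This gives \eqref{eq:AR-explicit}.

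\textbf{Expected obstacle.} The only delicate point is the identification $\Coker H(p)=H(\tau Z)$, which needs $H(I_Z)=0$. This follows from $\pi I_Z=0$, which was established before the statement. I would also double-check that the orientation conventions match. \cref{thm:main} uses middle terms of extensions in $\E(L,M)$, realised as $M\to Y\to L$, and the reverse direction $\E(M,L)=\E(\tau Z,Z)$ is the one in which $\delta_0$ lives. This is consistent with \cref{lem:AR-loci}.
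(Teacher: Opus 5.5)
Your proof is correct and follows the paper's argument. The paper also applies \cref{thm:main} with $L=Z$, $M=\tau Z$ and $V=\kk\delta_{\mathrm{AR}}$, together with \cref{lem:AR-loci}, to get $\X^T_{\tau Z}\X^T_Z=\X^T_Y+\X^T_{I_Z}$, and then applies \cref{prop:index-defect} to the companion cosyzygy triangle using $H(I_Z)=0$ and $\ind_T I_Z=[I_Z]$; the only difference is that you prove the index identity first rather than after the mesh relation, which is immaterial.
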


\begin{proof}
Apply \cref{thm:main} with $L=Z$, $M=\tau Z$ and
$V=\kk\delta_{\mathrm{AR}}$.  Since $\chi(\mathbb PV)=1$, \cref{lem:AR-loci} gives
\[
 \X^T_{\tau Z}\X^T_Z=\X^T_Y+\X^T_{I_Z}.
\]
It remains to identify the second term intrinsically.  Apply the index defect formula \cref{prop:index-defect} to the companion cosyzygy $\E$-triangle
\[
 Z\longrightarrow I_Z\xto{g}\tau Z\dashrightarrow.
\]
Since $I_Z$ is projective--injective, $\pi I_Z=0$ and hence $H(I_Z)=0$.  Therefore
\[
 \Coker H(g)=H(\tau Z),
\]
and \cref{prop:index-defect} yields
\[
 \ind_T Z-\ind_T I_Z+\ind_T(\tau Z)
 =\Phi([H(\tau Z)]).
\]
Every projective--injective object belongs to $\add T$, and for an object of $\add T$ its index is its split Grothendieck class.  Thus
\[
 \ind_T I_Z=[I_Z],
\]
which proves \eqref{eq:AR-frozen-class}.  Since $I_Z$ is a direct sum of the projective--injective summands $T_{r+1},\ldots,T_n$, the right side of \eqref{eq:AR-frozen-class} lies in the indicated positive frozen cone.  Finally, $\F I_Z=0$, so the cluster character formula \eqref{eq:character} gives
\[
 \X^T_{I_Z}=\bx^{[I_Z]}=\bx^{\varepsilon_Z}.
\]
Substituting this into the preceding mesh relation proves \eqref{eq:AR-explicit}.
\end{proof}

After specialising the frozen variables to $1$, the monomial $\bx^{\varepsilon_Z}$ becomes $1$, and \eqref{eq:AR-explicit} recovers the Auslander--Reiten multiplication formula of Dominguez--Geiss \cite{DominguezGeiss}.

\begin{remark}
The extension space may have dimension greater than one, since
\[
 \E(Z,\tau Z)\cong D\End_{\A}(\pi Z).
\]
\end{remark}

\section{Higgs categories and principal coefficients}\label{Higgs}

\subsection{Jacobi-finite Higgs categories}\label{subsec:general-higgs}
We now apply the coefficient lift to a genuinely extriangulated source of
examples.  We recall the notation from Wu \cite{WuRelative}.  An
\emph{ice quiver with potential} is a finite quiver $Q$, a frozen subquiver
$F$, and a potential $W$ on $Q$.  The relative Ginzburg dg algebra is denoted
by $\Gamma_{\mathrm{rel}}(Q,F,W)$, and its zeroth cohomology
\[
 J(Q,F,W)=H^0\Gamma_{\mathrm{rel}}(Q,F,W)
 =\kk Q/\langle\partial_aW\mid a\in Q_1\setminus F_1\rangle
\]
is the relative Jacobian algebra.  The triple is \emph{Jacobi-finite} when this
algebra is finite-dimensional.  If $e=\sum_{i\in F_0}e_i$, the relative
cluster category is the Verdier quotient specified by Wu
\cite[Definition~7.9]{WuRelative}.  Writing $\mathcal P=\add(e\Gamma_{\mathrm{rel}})$, its relative fundamental domain is
\[
 \mathcal F^{\mathrm{rel}}=
 \{\operatorname{Cone}(f:X_1\to X_0)\mid
 X_0,X_1\in\add\Gamma_{\mathrm{rel}},
 \Hom(f,I)\text{ is surjective for every }I\in\mathcal P\}.
\]  The \emph{Higgs category}
$\mathcal H(Q,F,W)$ is the image of that fundamental domain in the quotient.

We distinguish the dg algebra from its image as an object: write
$\boldsymbol\Gamma$ for the image of the free dg module
$\Gamma_{\mathrm{rel}}$ in $\mathcal H(Q,F,W)$, and set
$\Gamma_i=e_i\boldsymbol\Gamma$.  Wu \cite[Theorem~7.10]{WuRelative} proves that the Higgs category is
Hom-finite Frobenius $2$-Calabi--Yau extriangulated, that
$\boldsymbol\Gamma=\bigoplus_{i\in Q_0}\Gamma_i$ is cluster tilting, and that
its projective--injective subcategory is
$\add(e\boldsymbol\Gamma)=\add(\bigoplus_{i\in F_0}\Gamma_i)$.  No exact structure on $\mathcal H(Q,F,W)$ is used. Fraser, Keller and Wu \cite[Theorem~5.4]{FraserKellerWu} give another construction of a Higgs cluster character. Here all multiplication statements refer to the Wang--Wei--Zhang character \eqref{eq:character}.

\begin{proposition}\label{prop:Higgs-constructible}
Let $(Q,F,W)$ be a Jacobi-finite ice quiver with potential.  Then the stable category $\underline{\mathcal H(Q,F,W)}$ has constructible cones with respect to the image of the non-projective part of $\boldsymbol\Gamma$.  In particular, \cref{hyp:constructible-cones} holds for the Higgs category.
\end{proposition}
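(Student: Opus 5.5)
The proof will identify the stable category of the Higgs category with a generalised cluster category of a Jacobi-finite quiver with potential, and then quote Palu's result that such categories have constructible cones. By Wu \cite[Theorem~7.10]{WuRelative}, $\mathcal H=\mathcal H(Q,F,W)$ is Frobenius extriangulated with projective--injectives $\add(e\boldsymbol\Gamma)$. The stable category $\underline{\mathcal H}$ is therefore triangulated, Hom-finite and $2$-Calabi--Yau, and the image $T'$ of $\bigoplus_{i\notin F_0}\Gamma_i$ is cluster tilting in it by \cite[Lemma~3.1]{WangWeiZhangCC}. Wu further identifies $\underline{\mathcal H}$ with the Amiot cluster category $\mathcal C_{(Q',W')}$ of the quiver with potential obtained by deleting the frozen vertices, which is Jacobi-finite because $J(Q,F,W)$ is. Under this equivalence $T'$ corresponds to the canonical cluster-tilting object $\Gamma'$, and $\End_{\A}(T')\cong J(Q',W')$. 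The first step is to cite this equivalence and check the matching of cluster-tilting objects, using that $\pi(\Gamma_i)$ is the image of $e_i\Gamma'$ for unfrozen $i$.

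The second step invokes Palu \cite[Section~1.3 and Proposition~1.4]{Palu2012}. There, generalised cluster categories $\mathcal C_{(Q',W')}$ of Jacobi-finite quivers with potential are shown to have constructible cones with respect to the canonical cluster-tilting object. The underlying reason is that $\mathcal C_{(Q',W')}$ is a Verdier quotient of $\operatorname{per}\Gamma_{Q',W'}$, and the fundamental domain is described by two-term complexes in $\add\Gamma$. A morphism $\pi L\to\Sigma\pi M$ is then represented by a morphism of such two-term complexes whose matrix entries are linear in the extension. Taking the cone is a mapping-cone construction on these matrices, and applying $\Hom(\Gamma',-)$ yields the four-term diagram \eqref{eq:four-term} with matrix entries depending regularly on $\delta$ on finitely many constructible strata.

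The final step transports this property along the triangle equivalence. Constructibility of cylinders is invariant under triangle equivalences preserving the chosen cluster-tilting object up to the identification of $\Lambda$, since the map \eqref{eq:orbit-map} is intrinsic to the triangulated structure and $G$. Composing with the linear isomorphisms $\vartheta_{L,M}$ then gives \cref{hyp:constructible-cones} for all pairs $L,M\in\mathcal H$.

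The main obstacle is the first step: I need a precise reference, or a short argument, for the triangle equivalence $\underline{\mathcal H}\simeq\mathcal C_{(Q',W')}$ sending $T'$ to $\Gamma'$, including Jacobi-finiteness of $(Q',W')$. If that identification is unavailable in the needed form, I would fall back on Palu's direct argument. This means working inside the relative cluster category, using the relative fundamental domain $\mathcal F^{\mathrm{rel}}$ of two-term complexes, and repeating the mapping-cone computation modulo $\mathcal P$. Hom-finiteness, which follows from Jacobi-finiteness, ensures that the matrices live in finite-dimensional spaces.
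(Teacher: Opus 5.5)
Your proposal is correct and follows essentially the same route as the paper. The paper gets Jacobi-finiteness of the deleted quiver with potential from Wu's identification $J(Q',W')\cong J(Q,F,W)/J(Q,F,W)eJ(Q,F,W)$, a quotient of a finite-dimensional algebra. It takes the triangle equivalence $\underline{\mathcal H(Q,F,W)}\simeq\mathcal C_{(Q',W')}$, matching $T'$ with the canonical cluster-tilting object, directly from Wu's Theorem~7.10, then transports Palu's constructibility result for Jacobi-finite generalised cluster categories (cited as his Sections~2.4--2.5) along that equivalence. So the obstacle you flag is settled by citation, and your fallback argument inside $\mathcal F^{\mathrm{rel}}$ is not needed.
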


\begin{proof}
Delete the frozen vertices and all incident arrows, and denote the resulting
quiver with potential by $(\overline Q,\overline W)$.  If $e$ is the sum of the
frozen idempotents, then Wu \cite[Definition~7.6 and Proposition~7.8]{WuRelative} gives,
after taking zeroth cohomology of the dg quotient,
\[
 J(\overline Q,\overline W)
 \cong J(Q,F,W)/J(Q,F,W)eJ(Q,F,W).
\]
Thus $(\overline Q,\overline W)$ is Jacobi-finite.  Let
$\mathcal C(\overline Q,\overline W)$ denote Amiot's generalized cluster category associated with $(\overline Q,\overline W)$.  Wu \cite[Theorem~7.10]{WuRelative} gives a triangle equivalence
\[
 \underline{\mathcal H(Q,F,W)}
 \simeq
 \mathcal C(\overline Q,\overline W)
\]
which sends the non-projective part of $\boldsymbol\Gamma$ to the canonical cluster tilting object of the generalized cluster category.  Generalized cluster categories associated with Jacobi-finite quivers with potential have constructible cones by Palu \cite[Sections~2.4 and~2.5]{Palu2012}.  Keller, Plamondon and Qin \cite[Example~2.9]{KellerPlamondonQin} give an explicit summary.  Constructible cones are invariant under triangle equivalence.  Hence the stable category has constructible cones with respect to the image of the non-projective part of $\boldsymbol\Gamma$, which is exactly \cref{hyp:constructible-cones} in this setting.
\end{proof}

\begin{corollary}\label{cor:Higgs-main}
Let $(Q,F,W)$ be a Jacobi-finite ice quiver with potential.  For all $L,M\in\mathcal H(Q,F,W)$ and every nonzero subspace
\[
 V\subseteq\E_{\mathcal H(Q,F,W)}(L,M),
\]
the refined multiplication formula \eqref{eq:main} holds for the Wang--Wei--Zhang character associated with $\boldsymbol\Gamma(Q,F,W)$.
\end{corollary}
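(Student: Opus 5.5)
The plan is to deduce \cref{cor:Higgs-main} directly from \cref{thm:main} by checking that every standing hypothesis of that theorem is satisfied by $\C=\mathcal H(Q,F,W)$ with $T=\boldsymbol\Gamma(Q,F,W)$. The work is mostly a matter of matching the setup of \cref{sec:background} to Wu's results, so the proof will be a short chain of citations followed by a direct application.

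The verification will proceed in three steps.

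\begin{enumerate}[label=\textup{(\arabic*)},leftmargin=2.2em]
\item Invoke Wu \cite[Theorem~7.10]{WuRelative} to record that $\mathcal H(Q,F,W)$ is a Hom-finite Frobenius $2$-Calabi--Yau extriangulated category, and that $\boldsymbol\Gamma$ is cluster tilting with projective--injective part $\add(e\boldsymbol\Gamma)$.
\item Check the remaining standing conventions.
\begin{itemize}[leftmargin=1.6em]
\item $\mathcal H$ is $\kk$-linear, Krull--Schmidt and skeletally small. Krull--Schmidt follows from Hom-finiteness together with idempotent completeness; the latter holds because the Higgs category is a full subcategory of a Hom-finite triangulated quotient closed under summands.
\item $\boldsymbol\Gamma$ may be taken basic. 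If it is not, replace it by its basic version, which has the same additive closure and hence the same index and the same character up to identifying variables.
\item The split of $\boldsymbol\Gamma$ into the non-projective part $\bigoplus_{i\notin F_0}\Gamma_i$ and the frozen part $\bigoplus_{i\in F_0}\Gamma_i$ is exactly the decomposition $T_1\oplus\cdots\oplus T_r\oplus T_{r+1}\oplus\cdots\oplus T_n$ used to define $T'$, $\Lambda$ and $\Phi$. To justify this, each $\Gamma_i$ with $i\notin F_0$ should be non-projective. I expect this to follow from the description of the projective--injectives as $\add(e\boldsymbol\Gamma)$ and the Krull--Schmidt property.
\end{itemize}
\item By \cref{prop:Higgs-constructible}, the stable category has constructible cones with respect to the image of the non-projective part, which is $T'$. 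Hence \cref{hyp:constructible-cones} holds.
\end{enumerate}

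With these in place, apply \cref{thm:main} to arbitrary $L,M\in\mathcal H(Q,F,W)$ and $0\neq V\subseteq\E(L,M)$. This yields \eqref{eq:main} for the Wang--Wei--Zhang character $\X^{\boldsymbol\Gamma}$.

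The only step needing care is step (2). Specifically, one must make sure that the $T'$ in \cref{hyp:constructible-cones} is literally the object to which Wu's equivalence in \cref{prop:Higgs-constructible} refers, namely the image of the non-projective summands. This is immediate once each $\Gamma_i$ with $i\notin F_0$ is shown to be non-projective.
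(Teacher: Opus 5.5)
Your proposal matches the paper's proof, which simply combines \cref{prop:Higgs-constructible} (supplying \cref{hyp:constructible-cones}) with \cref{thm:main}; the standing hypotheses are taken from Wu's Theorem~7.10 and the paper's global conventions. The step you flag as needing care is in fact automatic: $T'=\pi\boldsymbol\Gamma$ by definition and $\pi$ kills every projective summand, so $T'$ is exactly the object in Wu's equivalence, and your extra Krull--Schmidt and basicness checks are not needed for the argument.
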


\begin{proof}
Combine \cref{prop:Higgs-constructible} with \cref{thm:main}.
\end{proof}

\begin{corollary}\label{cor:frozen-mesh}
Let $Z$ be indecomposable and non-projective.  Write its Auslander--Reiten
$\E$-triangle and a minimal companion cosyzygy triangle as
\[
 \tau Z\longrightarrow\bigoplus_{j=1}^{s}Y_j\longrightarrow Z
   \dashrightarrow,
 \hspace{2.5mm}
 Z\longrightarrow\bigoplus_{i\in F_0}\Gamma_i^{m_i(Z)}
   \longrightarrow\tau Z\dashrightarrow.
\]
Then the frozen multiplicity vector is determined by
\begin{equation}\label{eq:Higgs-frozen-class}
 \sum_{i\in F_0}m_i(Z)[\Gamma_i]
 =\ind_{\boldsymbol\Gamma} Z
  +\ind_{\boldsymbol\Gamma}(\tau Z)
  -\Phi([H(\tau Z)]).
\end{equation}
Consequently,
\begin{equation}\label{eq:frozen-mesh}
 \X^{\boldsymbol\Gamma}_{\tau Z}\X^{\boldsymbol\Gamma}_{Z}
 =\prod_{j=1}^{s}\X^{\boldsymbol\Gamma}_{Y_j}
  +\prod_{i\in F_0}x_i^{m_i(Z)}.
\end{equation}
\end{corollary}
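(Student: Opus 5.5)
The plan is to deduce the statement from \cref{cor:AR} applied to $\C=\mathcal H(Q,F,W)$ with cluster tilting object $T=\boldsymbol\Gamma$. This requires checking that the Higgs category satisfies the standing hypotheses of \cref{sec:main}.

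First, the structural input. By Wu \cite[Theorem~7.10]{WuRelative}, $\mathcal H(Q,F,W)$ is a Hom-finite Frobenius $2$-Calabi--Yau extriangulated category. In it, $\boldsymbol\Gamma=\bigoplus_{i\in Q_0}\Gamma_i$ is basic cluster tilting. Its projective--injective summands are exactly the $\Gamma_i$ with $i\in F_0$. These play the role of $T_{r+1},\dots,T_n$. By \cref{prop:Higgs-constructible}, \cref{hyp:constructible-cones} holds. Since $Z$ is indecomposable and non-projective, the Auslander--Reiten $\E$-triangle exists by the Iyama--Nakaoka--Palu criterion recalled before \eqref{eq:AR-and-cosy}.

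Second, the companion triangle. The construction preceding \eqref{eq:AR-and-cosy} produces a realisation $Z\to I_Z\to\tau Z\dashrightarrow$ with $I_Z\in\Pcal$ and $i_Z$ a minimal left $\Pcal$-approximation. Since $\Pcal=\add(\bigoplus_{i\in F_0}\Gamma_i)$ and the category is Krull--Schmidt, we can write $I_Z\cong\bigoplus_{i\in F_0}\Gamma_i^{m_i(Z)}$. A minimal left approximation is unique up to isomorphism. Hence the multiplicities $m_i(Z)$ do not depend on which minimal companion cosyzygy triangle is chosen, so the triangle in the statement may be identified with the one in \eqref{eq:AR-and-cosy}.

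Third, the two formulas.
\begin{itemize}
\item Formula \eqref{eq:Higgs-frozen-class} is precisely \eqref{eq:AR-frozen-class} from \cref{cor:AR}, rewritten using $\varepsilon_Z=[I_Z]=\sum_{i\in F_0}m_i(Z)[\Gamma_i]$.
\item Formula \eqref{eq:AR-explicit} gives $\X^{\boldsymbol\Gamma}_{\tau Z}\X^{\boldsymbol\Gamma}_Z=\X^{\boldsymbol\Gamma}_Y+\bx^{\varepsilon_Z}$, where $Y=\bigoplus_j Y_j$.
\item Multiplicativity $\X_{M\oplus N}=\X_M\X_N$ rewrites $\X^{\boldsymbol\Gamma}_Y$ as $\prod_j\X^{\boldsymbol\Gamma}_{Y_j}$.
\item The monomial $\bx^{\varepsilon_Z}$ equals $\prod_{i\in F_0}x_i^{m_i(Z)}$.
\end{itemize}
Together these give \eqref{eq:frozen-mesh}.

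The only step needing real care is the independence of the choice of minimal companion triangle. Minimality of left approximations gives uniqueness of $I_Z$ up to isomorphism, which settles it. Alternatively, independence follows directly from \eqref{eq:Higgs-frozen-class}, because the right-hand side there is intrinsic to $Z$.
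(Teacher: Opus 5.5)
Your proposal is correct and is the paper's argument: the paper's proof consists solely of applying \cref{cor:AR} in the Higgs category $\mathcal H(Q,F,W)$ with $T=\boldsymbol\Gamma$. Your extra checks fill in what that one-line proof leaves implicit, namely the standing hypotheses via Wu's theorem and \cref{prop:Higgs-constructible}, independence of the chosen minimal companion triangle, and the rewriting of $\X^{\boldsymbol\Gamma}_Y$ by multiplicativity.
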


\begin{proof}
Apply \cref{cor:AR} in the Higgs category.
\end{proof}

\subsection{Acyclic principal coefficients and frozen multiplicities}\label{subsec:principal}

Let $Q$ be a finite nonempty acyclic quiver and put $A=\kk Q$.  We work with right modules and write $P_i=e_iA$ for the indecomposable projective at $i\in Q_0$.  Form the principal ice quiver $\widehat Q$ by adjoining, for every mutable vertex $i\in Q_0$, a frozen vertex $i^+$ and a framing arrow
\[
 c_i\colon i^+\longrightarrow i.
\]
Let $F$ be the full subquiver on the frozen vertices and take $W=0$.  Paths are composed from right to left, as in Wu \cite[Definition~7.5]{WuRelative}. Thus $e_uAe_v$ consists of paths from $v$ to $u$. The algebra acting on $G$ is $\End(T')$, without an opposite. For the seed convention we use the Gabriel quiver of $\End_{\mathcal H_Q}(\boldsymbol\Gamma)^{\mathrm{op}}$. Its mutable part is $Q^{\mathrm{op}}$ and its framing arrows are $i\to i^+$. Accordingly this model has principal coefficients for $Q^{\mathrm{op}}$ in that seed convention. Since $Q$ is arbitrary acyclic, this labels the full acyclic family. In comparisons with formulas for left modules, $\modu A$ is identified with $\kk Q^{\mathrm{op}}\text{-}\modu$.  Put
\[
 \Gamma_{\mathrm{rel}}=\Gamma_{\mathrm{rel}}(\widehat Q,F,0),
 \hspace{2.5mm}
 \mathcal H_Q=\mathcal H(\widehat Q,F,0).
\]
For a mutable vertex $j$, write $\widetilde\Gamma_j=e_j\Gamma_{\mathrm{rel}}$ for the corresponding right dg projective and $\Gamma_j$ for its image in $\mathcal H_Q$.  For a frozen vertex $i^+$, put
\[
 \widetilde I_i=e_{i^+}\Gamma_{\mathrm{rel}},
  \hspace{2.5mm}
 I_i=\Gamma_{i^+}.
\]
By Wu \cite[Theorem~7.10]{WuRelative}, $\mathcal H_Q$ is a Hom-finite Frobenius $2$-Calabi--Yau extriangulated category,
\[
 \proj\mathcal H_Q=\inj\mathcal H_Q
 =\add\Bigl(\bigoplus_{i\in Q_0}I_i\Bigr),
\]
and its stable category is triangle equivalent to the acyclic cluster category
\[
 \underline{\mathcal H_Q}\simeq\mathcal C_Q,
 \hspace{2.5mm}
 \mathcal C_Q=D^b(\modu A)/(\tau_D^{-1}\Sigma),
\]
where Buan, Marsh, Reineke, Reiten and Todorov \cite{BMRRT} give the description of the orbit category $\mathcal C_Q$.

\subsubsection{The Higgs lift of a hereditary module}
For any finite-dimensional right $A$-module $M$, choose its minimal projective resolution
\begin{equation}\label{eq:principal-projres}
 0\longrightarrow P_1\xrightarrow{f}P_0\longrightarrow M\longrightarrow0.
\end{equation}
Lift the mutable projectives in $P_0,P_1$ to direct sums $\widetilde P_0,\widetilde P_1$ of the dg projectives $\widetilde\Gamma_j$ and choose the induced lift $\widetilde f\colon\widetilde P_1\to\widetilde P_0$.  Set $\widetilde Z_M=\operatorname{Cone}(\widetilde f)$.  There is no path of degree zero from a mutable vertex to a frozen vertex, hence
\[
 \Hom_{\operatorname{per}\Gamma_{\mathrm{rel}}}(\widetilde P_r,\widetilde I_i)=0
  \hspace{2.5mm}(r=0,1).
\]
Thus the surjectivity condition in Wu's relative fundamental domain is automatic, and we denote by $Z_M\in\mathcal H_Q$ the image of $\widetilde Z_M$.

\begin{lemma}\label{lem:principal-lift-minimal}
Under the equivalence $\underline{\mathcal H_Q}\simeq\mathcal C_Q$, the stable image of $Z_M$ is isomorphic to the image of $M$ in $\mathcal C_Q$.  Moreover, $Z_M$ has no nonzero projective--injective direct summand.  In particular, if $M$ is indecomposable, then $Z_M$ is indecomposable and non-projective in $\mathcal H_Q$. For $M=P_j$, one has $Z_M=\Gamma_j$.
\end{lemma}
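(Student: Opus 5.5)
The plan is to compare the three categories through the localisation functors in Wu's construction. Recall that $\underline{\mathcal H_Q}\simeq\mathcal C(\overline Q,0)$ is obtained from $\operatorname{per}\Gamma_{\mathrm{rel}}$ by killing the frozen part, i.e.\ by the dg quotient $\Gamma_{\mathrm{rel}}\to\Gamma_{\mathrm{rel}}/\Gamma_{\mathrm{rel}}e\Gamma_{\mathrm{rel}}\simeq\Gamma(Q,0)$, the ordinary Ginzburg dg algebra of $(Q,0)$. Under this quotient each $\widetilde\Gamma_j$ goes to $e_j\Gamma(Q,0)$. Amiot's equivalence $\mathcal C(Q,0)\simeq\mathcal C_Q$ (Keller's comparison for acyclic quivers) sends $e_j\Gamma(Q,0)$ to the image of $P_j$. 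Moreover, the degree-zero morphism $\widetilde f$ goes to the morphism $f\colon P_1\to P_0$, because $H^0$ of the relevant Hom-spaces between the dg projectives is $e_uAe_v$ and the framing arrows do not contribute there.

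The first step, then, is to use the triangle functor: it carries $\operatorname{Cone}(\widetilde f)$ to $\operatorname{Cone}(f)\simeq M$ in $\mathcal C_Q$, since $0\to P_1\to P_0\to M\to0$ is a triangle in $D^b(\modu A)$. This identifies the stable image of $Z_M$ with $M$. The case $M=P_j$ is immediate: the minimal resolution has $P_1=0$, so $\widetilde Z_M=\widetilde\Gamma_j$ and its image is $\Gamma_j$.

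The second step is to show that $Z_M$ has no summand $I_i$. I would show that $\Hom_{\mathcal H_Q}(Z_M,I_i)=0$, or dually that $\Hom_{\mathcal H_Q}(I_i,Z_M)\to\Hom(I_i,I_i)$ cannot be split. The cleanest route is to use Wu's description of morphisms in the Higgs category through the fundamental domain: Hom-spaces from objects of $\mathcal F^{\mathrm{rel}}$ to $\mathcal P$ agree with those in $\operatorname{per}\Gamma_{\mathrm{rel}}$. Applying $\Hom(-,\widetilde I_i)$ to the triangle $\widetilde P_1\to\widetilde P_0\to\widetilde Z_M\to$ gives an exact sequence
\[
\Hom(\Sigma\widetilde P_1,\widetilde I_i)\to\Hom(\widetilde Z_M,\widetilde I_i)\to\Hom(\widetilde P_0,\widetilde I_i).
\]
Here $\Hom(\widetilde P_0,\widetilde I_i)=H^0(e_{i^+}\Gamma_{\mathrm{rel}}e_{\bullet})$ over mutable vertices vanishes, because there are no paths from mutable to frozen vertices. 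Similarly $\Hom(\Sigma\widetilde P_1,\widetilde I_i)=H^{-1}(\ldots)$, which is spanned by reversed arrows $a^*$ of mutable arrows only; the relative construction adds no $c_i^*$ and no loops at frozen vertices. Hence $H^{-1}$ also has no frozen-to-mutable component ending at $i^+$, and so $\Hom(Z_M,I_i)=0$. A nonzero summand $I_i$ would give a nonzero split map, which is a contradiction. Checking precisely which degree $-1$ generators of $\Gamma_{\mathrm{rel}}$ exist, and that Hom in $\mathcal H_Q$ into projectives agrees with Hom in $\operatorname{per}\Gamma_{\mathrm{rel}}$, is the main obstacle; I would verify the latter from Wu's Verdier quotient description, where $\mathcal P$-objects are local with respect to the kernel.

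The final step is formal. If $M$ is indecomposable, its image in $\mathcal C_Q$ is indecomposable and nonzero, since modules embed fully faithfully modulo nothing for hereditary $A$ as the degree-zero slice. So $\pi Z_M$ is indecomposable and nonzero. Since $Z_M$ has no projective–injective summands, Krull–Schmidt forces $Z_M$ to be indecomposable: any decomposition would yield two nonzero stable summands, using \cref{lem:stable-lift}-type reasoning. It is also non-projective, because $\pi Z_M\neq0$.
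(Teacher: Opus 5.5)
Your second step has a genuine gap. The vanishing $\Hom_{\mathcal H_Q}(Z_M,I_i)=0$ that you aim for is false whenever $M$ is not projective, so no argument built on it can succeed.

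The error is in your description of $\Gamma_{\mathrm{rel}}$. In Wu's relative Ginzburg algebra, only the arrows of the frozen subquiver $F$ lose their reverse arrows, and only frozen vertices lose their degree $-2$ loops. The framing arrows $c_i\colon i^+\to i$ are not in $F_1$, so the reverse arrow $c_i^*\colon i\to i^+$ exists in degree $-1$. Hence $\Hom(\Sigma\widetilde\Gamma_j,\widetilde I_i)\simeq H^{-1}(e_{i^+}\Gamma_{\mathrm{rel}}e_j)\simeq e_iAe_j$, spanned by the classes $c_i^*p$; this is \cref{lem:corner}.

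The exact sequence you wrote therefore does not give zero. It computes $\Hom_{\mathcal H_Q}(Z_M,I_i)\simeq\Ext^1_A(M,P_i)$, exactly as in the proof of \cref{prop:principal-multiplicity}. This is nonzero for some $i$ as soon as $M$ is non-projective; for indecomposable $M$ its dimension is $\dim(\tau_AM)_i$. These nonzero maps are precisely what later produce the frozen multiplicities $m_i(Z_M)$. Your final step, which needs the absence of projective--injective summands, is therefore unsupported as written.

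The missing idea is to use projectivity of the putative summand against the presentation, rather than any Hom-vanishing. This is the paper's argument.

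\begin{enumerate}[label=\textup{(\arabic*)},leftmargin=2.6em]
\item The triangle $\widetilde P_1\to\widetilde P_0\to\widetilde Z_M\to\Sigma\widetilde P_1$ induces an $\E$-triangle $P_1^{\boldsymbol\Gamma}\to P_0^{\boldsymbol\Gamma}\xto{g}Z_M\etri$ in $\mathcal H_Q$, with $P_0^{\boldsymbol\Gamma}\in\add\bigl(\bigoplus_{j\in Q_0}\Gamma_j\bigr)$.
\item Suppose $Z_M\simeq Z'\oplus I$ with $0\neq I$ projective--injective, and let $\iota$ and $\rho$ be the inclusion and projection for $I$.
\item Since $I$ is projective and $g$ is a deflation, there is $s\colon I\to P_0^{\boldsymbol\Gamma}$ with $gs=\iota$. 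Then $(\rho g)s=\rho\iota=1_I$, so $I$ is a direct summand of $P_0^{\boldsymbol\Gamma}$.
\item This contradicts Krull--Schmidt and basicness of $\boldsymbol\Gamma$. The indecomposable projective--injectives are the frozen summands $\Gamma_{i^+}$, and none of them is isomorphic to a mutable summand $\Gamma_j$.
\end{enumerate}

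With this replacement, the rest of your plan goes through as in the paper. Your first step is a more detailed version of the paper's identification of the stable image via the dg quotient and the equivalence with $\mathcal C_Q$. Your closing Krull--Schmidt argument is the same as the paper's.
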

\begin{proof}
The distinguished triangle
\[
 \widetilde P_1\longrightarrow\widetilde P_0\longrightarrow
 \widetilde Z_M\longrightarrow\Sigma\widetilde P_1
\]
passes, under the stable equivalence, to the triangle determined by the projective resolution \eqref{eq:principal-projres}.  Hence the stable image of $Z_M$ is the image of $M$ in $\mathcal C_Q$.

Let $P_r^{\boldsymbol\Gamma}$ be the image of $\widetilde P_r$ in $\mathcal H_Q$.  The same distinguished triangle induces an $\E$-triangle
\[
 P_1^{\boldsymbol\Gamma}\longrightarrow P_0^{\boldsymbol\Gamma}
 \longrightarrow Z_M\dashrightarrow,
 \hspace{2.5mm}
 P_r^{\boldsymbol\Gamma}\in\add\!\Bigl(\bigoplus_{j\in Q_0}\Gamma_j\Bigr).
\]
Suppose that $Z_M\simeq Z'\oplus I$ with $0\ne I$ projective--injective, and write
\[
 g\colon P_0^{\boldsymbol\Gamma}\longrightarrow Z'\oplus I
\]
for the deflation in the displayed $\E$-triangle.  Let $\iota\colon I\to Z'\oplus I$ and $\rho\colon Z'\oplus I\to I$ be the canonical inclusion and projection.  Since $I$ is projective and $g$ is a deflation, $\iota$ lifts through $g$: there is a morphism $s\colon I\to P_0^{\boldsymbol\Gamma}$ with $gs=\iota$.  Setting $r=\rho g$ gives
\[
 rs=\rho gs=\rho\iota=1_I.
\]
Thus $I$ is a direct summand of $P_0^{\boldsymbol\Gamma}$.  This is impossible, because $P_0^{\boldsymbol\Gamma}$ is a direct sum of mutable summands $\Gamma_j$, whereas every indecomposable projective--injective is one of the frozen summands $I_i=\Gamma_{i^+}$ of the basic cluster tilting object $\boldsymbol\Gamma$.

If $M$ is indecomposable, Buan, Marsh, Reineke, Reiten and Todorov \cite{BMRRT} show that the standard fundamental domain for the hereditary cluster category identifies indecomposable $A$-modules with indecomposable objects of $\mathcal C_Q$.  If $Z_M$ decomposed nontrivially, then its stable image would decompose. Hence one summand would have zero stable image and would therefore be projective--injective, contradicting the preceding paragraph.  Since its stable image is nonzero, $Z_M$ is non-projective.
\end{proof}

\subsubsection{The degree \texorpdfstring{$-1$}{-1} frozen corner}
We use Wu's convention for right modules: an element of $e_u\Gamma_{\mathrm{rel}}e_v$ is represented by a path beginning at $v$ and ending at $u$.  Thus, if $p\colon j\rightsquigarrow i$ is an ordinary path in $Q$, then $c_i^*p$ means that one first follows $p$ and then the reverse degree $-1$ arrow $c_i^*\colon i\to i^+$.

\begin{lemma}\label{lem:corner}
For mutable vertices $i,j\in Q_0$ there is a natural isomorphism
\[
 H^{-1}(e_{i^+}\Gamma_{\mathrm{rel}}e_j)
 \simeq e_iAe_j
 \simeq\Hom_A(P_j,P_i),
\]
which sends the class of $c_i^*p$ to the ordinary path $p\colon j\rightsquigarrow i$.
\end{lemma}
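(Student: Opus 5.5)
The plan is to compute the complex $e_{i^+}\Gamma_{\mathrm{rel}}e_j$ directly from the explicit generators and differential of the relative Ginzburg dg algebra of $(\widehat Q,F,0)$, following Wu \cite[Definition~7.5]{WuRelative}. The graded quiver of $\Gamma_{\mathrm{rel}}$ has the following generators. In degree $0$ there are the arrows of $\widehat Q$, namely the arrows of $Q$ and the framing arrows $c_i\colon i^+\to i$. In degree $-1$ there are reversed arrows $a^*$ only for the mutable arrows $a$; here all arrows of $Q$ and all $c_i$ are mutable, since $F$ contains no arrows. In degree $-2$ there are loops $t_u$ only at the mutable vertices $u\in Q_0$. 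Since $W=0$, the differential is $d(a^*)=0$ and $d(t_u)=e_u\bigl(\sum_a[a,a^*]\bigr)e_u$.

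The first step is to describe $e_{i^+}\Gamma_{\mathrm{rel}}e_j$ in degrees $0$, $-1$ and $-2$. The only arrow entering $i^+$ is $c_i^*\colon i\to i^+$ of degree $-1$; no degree-$0$ arrow ends at $i^+$, and there is no loop $t_{i^+}$. So every path from $j$ to $i^+$ ends with $c_i^*$, possibly preceded by the degree-$0$ arrow $c_i$ and by further material. This gives degree $0$ equal to zero, so there are no coboundaries coming from degree $0$. In degree $-1$, a path must be $c_i^*p$ with $p$ a degree-$0$ path from $j$ to $i$. Such a path $p$ lies in $\widehat Q$, but it cannot pass through any frozen vertex $k^+$, because no degree-$0$ arrow enters $k^+$ and $j$ is mutable. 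Hence $p$ is a path of $Q$, and the degree $-1$ part is identified with $e_iAe_j$ via $c_i^*p\mapsto p$.

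Every degree $-1$ element is a cocycle, since $d(c_i^*)=0$ and $d$ vanishes on degree-$0$ paths. It remains to determine the coboundaries $d(\text{degree }-2)$. The degree $-2$ part is spanned by paths containing either one $t_u$ or two starred arrows. Paths ending in $c_i^*$ of the form $c_i^*\,q\,t_u\,p$ have differential $c_i^*q\,(\sum_a[a,a^*])_u\,p$. Each term contains a further starred arrow besides $c_i^*$, hence involves a degree-$(-1)$ arrow and does not lie in the span of the paths $c_i^*p$, with one exception: the term $c_ic_i^*$ at $u=i$. The sign convention for that commutator term needs to be checked. That term yields $c_i^*\,(\cdots c_ic_i^*\cdots)$, and after the first occurrence of $c_i^*$ the path sits at $i^+$, from which it can only continue via $c_i$. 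So it still contains two starred letters. Paths with two starred arrows are $d$-closed and contribute nothing.

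The main obstacle is to show rigorously that no combination of coboundaries lands in the span of $\{c_i^*p\}$ other than $0$. I would handle this with a grading by the number of starred letters plus twice the number of $t$'s. Each path $c_i^*p$ has exactly one starred letter and no $t$. The differential replaces one $t_u$ by a single starred letter, so $d$ of a path with one $t$ and $m$ starred letters is a sum of paths with $m+1$ starred letters. Paths ending in $c_i^*$ whose differential has exactly one starred letter would require $m=0$, that is, a path $c_i^*qt_up$ with no other starred letters. This is impossible, because $c_i^*$ is itself starred, so $m\ge1$. Hence the coboundaries meet $\operatorname{span}\{c_i^*p\}$ trivially and $H^{-1}=e_iAe_j$. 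Finally, the identification $e_iAe_j\simeq\Hom_A(P_j,P_i)$ is the standard left multiplication on right projectives, $P_j=e_jA$. Naturality in $i$ and $j$ follows because both isomorphisms are compatible with composition by paths of $Q$.
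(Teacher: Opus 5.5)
Your proposal is correct and follows essentially the paper's route: you identify the degree $-1$ part of $e_{i^+}\Gamma_{\mathrm{rel}}e_j$ with $\bigoplus_{p\colon j\rightsquigarrow i}\kk\,c_i^*p$, note that all of it consists of cocycles since $W=0$, and check that no boundaries land there. There is one bookkeeping slip: a path $c_i^*\,q\,t_u\,p$ has degree $-3$, not $-2$, so the degree $-2$ part of the corner contains no loops $t_u$ at all and consists only of $d$-closed paths with two starred letters (the paper's one-line argument); this makes your discussion of the $c_ic_i^*$ ``exception'' unnecessary, though your final star-counting argument is valid and reaches the same conclusion.
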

\begin{proof}
By Wu \cite[Definition~7.5]{WuRelative}, each arrow of $\widehat Q$ which is not frozen has a reverse arrow of degree $-1$, and each mutable vertex has a loop of degree $-2$.  Since $W=0$, $d(a^*)=\partial_aW=0$ for every reverse arrow, in particular $d(c_i^*)=0$.

A degree $-1$ path from mutable $j$ to frozen $i^+$ contains exactly one reverse arrow.  Since no arrow of degree zero enters a frozen vertex, that reverse arrow must be $c_i^*$ and must be the final arrow.  Hence
\[
 (e_{i^+}\Gamma_{\mathrm{rel}}e_j)^{-1}
 =\bigoplus_{p:j\rightsquigarrow i}\kk\,c_i^*p,
\]
and all these elements are closed.

A degree $-2$ path in this corner either contains two degree $-1$ reverse arrows or one degree $-2$ loop.  The latter case is impossible: the degree $-2$ loops exist only at mutable vertices, and after such a loop one would need a path of degree zero from a mutable vertex to the frozen endpoint $i^+$.  Thus every degree $-2$ path in the corner contains two degree $-1$ reverse arrows.  Their differentials vanish, so the Leibniz rule gives
\[
 d\bigl((e_{i^+}\Gamma_{\mathrm{rel}}e_j)^{-2}\bigr)=0.
\]
Hence there are no degree $-2$ boundaries in degree $-1$, and
\[
 H^{-1}(e_{i^+}\Gamma_{\mathrm{rel}}e_j)
 =\bigoplus_{p:j\rightsquigarrow i}\kk[c_i^*p]
 \simeq e_iAe_j.
\]
For right modules, $\Hom_A(e_jA,e_iA)\simeq e_iAe_j$.
\end{proof}

\begin{remark}\label{rem:corner-naturality}
If $u\colon j\rightsquigarrow\ell$ and $v\colon\ell\rightsquigarrow i$ are ordinary paths, then $(c_i^*v)u=c_i^*(vu)$.  Thus \cref{lem:corner} is compatible with precomposition and hence with matrices of paths representing $f$ in \eqref{eq:principal-projres}.
\end{remark}
\subsubsection{Frozen multiplicities}
\begin{proposition}\label{prop:principal-multiplicity}
Let $M$ be an indecomposable non-projective right $A$-module. Write the minimal companion cosyzygy of $Z_M$ as
\[
 Z_M\longrightarrow I_{Z_M}\longrightarrow\tau Z_M\dashrightarrow,
  \hspace{2.5mm}I_{Z_M}\simeq\bigoplus_{i\in Q_0}I_i^{m_i(Z_M)}.
\]
Then
\[
 m_i(Z_M)=\dim_{\kk}\Ext_A^1(M,P_i)=\dim_{\kk}(\tau_A M)_i.
\]
\end{proposition}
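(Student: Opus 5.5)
The plan is to read off $m_i(Z_M)$ as a Hom-dimension and then compute that Hom-space from the dg model.

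\emph{Reduction to a Hom-dimension.} First I compute the endomorphism algebra of the frozen part. By Wu's description, $\Hom_{\mathcal H_Q}(I_j,I_i)=H^0(e_{i^+}\Gamma_{\mathrm{rel}}e_{j^+})$. In $\widehat Q$ the only arrows at a frozen vertex are $c_i\colon i^+\to i$, which leave it. Hence there is no degree-zero path from $j^+$ to $i^+$ except the trivial one when $i=j$. So $\End(\bigoplus_i I_i)\cong\kk^{Q_0}$ is semisimple, and the $I_i$ are pairwise orthogonal bricks.

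By the discussion before \cref{known:AR-locus}, $Z_M\to I_{Z_M}$ is a minimal left $\Pcal$-approximation. For a semisimple frozen endomorphism algebra, the minimal left approximation is
\[
 Z_M\longrightarrow\bigoplus_i I_i\otimes D\Hom(Z_M,I_i).
\]
Therefore
\[
 m_i(Z_M)=\dim_{\kk}\Hom_{\mathcal H_Q}(Z_M,I_i).
\]

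\emph{Computing $\Hom(Z_M,I_i)$.} I apply $\Hom(-,\widetilde I_i)$ in $\operatorname{per}\Gamma_{\mathrm{rel}}$ to the triangle $\widetilde P_1\to\widetilde P_0\to\widetilde Z_M\to\Sigma\widetilde P_1$. This gives the exact sequence
\[
 \Hom(\Sigma\widetilde P_0,\widetilde I_i)\to\Hom(\Sigma\widetilde P_1,\widetilde I_i)\to\Hom(\widetilde Z_M,\widetilde I_i)\to\Hom(\widetilde P_0,\widetilde I_i).
\]
The last term is $0$, as already noted before \cref{lem:principal-lift-minimal}. For the first two terms,
\[
 \Hom(\Sigma\widetilde\Gamma_j,\widetilde I_i)=H^{-1}(e_{i^+}\Gamma_{\mathrm{rel}}e_j)\cong\Hom_A(P_j,P_i)
\]
by \cref{lem:corner}. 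By \cref{rem:corner-naturality}, the first map is $\Hom_A(f,P_i)$. Hence
\[
 \Hom(\widetilde Z_M,\widetilde I_i)\cong\Coker\Hom_A(f,P_i)=\Ext^1_A(M,P_i).
\]

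\emph{Main obstacle.} The step I expect to be hardest is passing from $\operatorname{per}\Gamma_{\mathrm{rel}}$ to the Verdier quotient defining $\mathcal H_Q$. I must show that Hom from an object of the relative fundamental domain into a projective--injective $\widetilde I_i$ is unchanged by the quotient. I would try first to cite Wu's result that the quotient functor restricted to $\mathcal F^{\mathrm{rel}}$ is fully faithful, which is part of the proof that $\mathcal H$ is its image. Failing that, I would check directly that $\widetilde I_i$ is right orthogonal to the kernel of the localisation, which consists of objects with finite-dimensional total cohomology supported away from the frozen part.

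\emph{Identifying $\Ext^1_A(M,P_i)$ with $(\tau_AM)_i$.} Since $A$ is hereditary and $M$ is indecomposable non-projective, we have $\operatorname{Tr}M\cong\Ext^1_A(M,A)$ as a left $A$-module. Then $\tau_AM=D\Ext^1_A(M,A)$. This gives
\[
 (\tau_AM)_i=(\tau_AM)e_i\cong D\bigl(e_i\Ext^1_A(M,A)\bigr)=D\Ext^1_A(M,e_iA)=D\Ext^1_A(M,P_i).
\]
Taking dimensions yields $m_i(Z_M)=\dim_{\kk}\Ext^1_A(M,P_i)=\dim_{\kk}(\tau_AM)_i$. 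The idempotent bookkeeping here should be checked against the right-module convention fixed in \cref{subsec:principal}.
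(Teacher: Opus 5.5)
Your proposal is correct and follows essentially the same route as the paper. You reduce $m_i(Z_M)$ to $\dim_{\kk}\Hom_{\mathcal H_Q}(Z_M,I_i)$ using that the frozen summands are pairwise orthogonal bricks, compute this Hom in $\operatorname{per}\Gamma_{\mathrm{rel}}$ via Wu's full faithfulness on $\mathcal F^{\mathrm{rel}}$ and the degree $-1$ corner lemma, and obtain $\Coker\Hom_A(f,P_i)=\Ext^1_A(M,P_i)$. The differences are minor: you identify the minimal left approximation directly as the canonical evaluation map where the paper derives injectivity of $u^*$ from left minimality, and you use $\tau_A M\cong D\Ext^1_A(M,A)$ via the transpose where the paper uses Serre duality in $D^b(\modu A)$; both are standard and equally valid.
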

\begin{proof}
There are no nontrivial paths of degree zero between frozen vertices.  By Wu \cite[Proposition~5.20]{WuRelative}, the relative quotient is fully faithful on the relative fundamental domain, and hence
\[
 \End_{\mathcal H_Q}(I_i)=\kk,
  \hspace{2.5mm} \Hom_{\mathcal H_Q}(I_j,I_i)=0 \hspace{2.5mm}(j\ne i).
\]
Let $u:Z_M\to I_{Z_M}$ be the minimal left projective--injective approximation.  Since $u$ is a left $\add(\bigoplus_i I_i)$-approximation, composition with $u$ gives a surjection
\[
 u^*: \Hom_{\mathcal H_Q}(I_{Z_M},I_i)\longrightarrow
 \Hom_{\mathcal H_Q}(Z_M,I_i).
\]
The source is naturally $\kk^{m_i(Z_M)}$.  The map $u^*$ is also injective.  Indeed, if $0\ne p\in\Hom_{\mathcal H_Q}(I_{Z_M},I_i)$ satisfied $pu=0$, then after a change of basis among the $I_i$-summands we could arrange that one coordinate projection vanishes on $u$.  The endomorphism of $I_{Z_M}$ which kills that copy of $I_i$ and is the identity on all other summands would then satisfy $su=u$ without being invertible, contradicting left minimality.  Therefore $u^*$ is an isomorphism and
\begin{equation}\label{eq:principal-m-hom}
 m_i(Z_M)=\dim_{\kk}\Hom_{\mathcal H_Q}(Z_M,I_i).
\end{equation}
Apply $\Hom(-,\widetilde I_i)$ in $\operatorname{per}\Gamma_{\mathrm{rel}}$ to the triangle defining $\widetilde Z_M$.  Since $\Hom(\widetilde P_r,\widetilde I_i)=0$, the relevant part of the long exact sequence is
\[
 \Hom(\widetilde P_0,\Sigma^{-1}\widetilde I_i)
 \longrightarrow \Hom(\widetilde P_1,\Sigma^{-1}\widetilde I_i)
 \longrightarrow \Hom(\widetilde Z_M,\widetilde I_i)\longrightarrow0.
\]
The preceding construction shows $\widetilde Z_M\in\mathcal F^{\mathrm{rel}}$, while $\widetilde I_i=e_{i^+}\Gamma_{\mathrm{rel}}\in\add\Gamma_{\mathrm{rel}}\subset\mathcal F^{\mathrm{rel}}$.  Hence the full faithfulness result of Wu \cite[Proposition~5.20]{WuRelative} identifies the last term with $\Hom_{\mathcal H_Q}(Z_M,I_i)$.  Since all dg modules are right modules,
\[
 \Hom_{\operatorname{per}\Gamma_{\mathrm{rel}}}(\widetilde\Gamma_j,\Sigma^{-1}\widetilde I_i)
 \simeq H^{-1}(e_{i^+}\Gamma_{\mathrm{rel}}e_j).
\]
By \cref{lem:corner,rem:corner-naturality}, the displayed cokernel is exactly the cokernel of
\[
 \Hom_A(P_0,P_i)\xrightarrow{f^*}\Hom_A(P_1,P_i).
\]
Applying $\Hom_A(-,P_i)$ to \eqref{eq:principal-projres} gives
\[
 \Hom_{\mathcal H_Q}(Z_M,I_i)\simeq\Ext_A^1(M,P_i).
\]
Together with \eqref{eq:principal-m-hom}, this proves the first equality.

For the second, let $S$ be the Serre functor on $D^b(\modu A)$ and $\tau_D=S[-1]$.  Serre duality gives
\[
\begin{aligned}
 D\Ext_A^1(M,P_i)
 &=D\Hom_{D^b(A)}(M,P_i[1])\\
 &\simeq\Hom_{D^b(A)}(P_i[1],SM)\\
 &\simeq\Hom_{D^b(A)}(P_i,\tau_D M).
\end{aligned}
\]
Since $A$ is hereditary and $M$ is non-projective, $\tau_D M\simeq\tau_A M$ is a module.  Hence $D\Ext_A^1(M,P_i)\simeq\Hom_A(P_i,\tau_A M)\simeq(\tau_A M)e_i$, and dimensions give the second equality.
\end{proof}

\begin{corollary}\label{cor:principal-AR}
Let $M$ be an indecomposable non-projective right $A$-module, and let
\[
 \tau Z_M\longrightarrow Y_M\longrightarrow Z_M\dashrightarrow
\]
be the Auslander--Reiten $\E$-triangle in $\mathcal H_Q$.  Then
\begin{equation}\label{eq:principal-AR}
 \X^{\boldsymbol\Gamma}_{\tau Z_M}\X^{\boldsymbol\Gamma}_{Z_M}
 =\X^{\boldsymbol\Gamma}_{Y_M}
  +\prod_{i\in Q_0}x_{i^+}^{\dim_{\kk}(\tau_A M)_i}.
\end{equation}
\end{corollary}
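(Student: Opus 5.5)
The plan is to deduce \cref{cor:principal-AR} directly from \cref{cor:frozen-mesh}, which holds in $\mathcal H_Q$ because \cref{prop:Higgs-constructible} verifies \cref{hyp:constructible-cones} for the Jacobi-finite ice quiver $(\widehat Q,F,0)$. The quiver $\widehat Q$ is acyclic and $W=0$, so $J(\widehat Q,F,0)=\kk\widehat Q$ is finite-dimensional. The first step is to check that $Z=Z_M$ satisfies the hypotheses of \cref{cor:frozen-mesh}. By \cref{lem:principal-lift-minimal}, $Z_M$ is indecomposable and non-projective when $M$ is indecomposable. Its Auslander--Reiten $\E$-triangle therefore exists, by the Iyama--Nakaoka--Palu criterion supplied by $2$-Calabi--Yau duality, as recalled before \eqref{eq:AR-and-cosy}. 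Its minimal companion cosyzygy $Z_M\to I_{Z_M}\to\tau Z_M\dashrightarrow$ has $I_{Z_M}\in\Pcal=\add(\bigoplus_{i\in Q_0}I_i)$.

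Applying \cref{cor:frozen-mesh}, or equivalently \cref{cor:AR}, gives
\[
 \X^{\boldsymbol\Gamma}_{\tau Z_M}\X^{\boldsymbol\Gamma}_{Z_M}
 =\X^{\boldsymbol\Gamma}_{Y_M}+\X^{\boldsymbol\Gamma}_{I_{Z_M}},
 \qquad
 \X^{\boldsymbol\Gamma}_{I_{Z_M}}=\prod_{i\in Q_0}x_{i^+}^{m_i(Z_M)}.
\]
Here we use that $I_i=\Gamma_{i^+}$ is the cluster tilting summand labelled by the frozen vertex $i^+$, so $\X^{\boldsymbol\Gamma}_{I_i}=x_{i^+}$, and that the character is multiplicative on direct sums. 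The multiplicativity identity $\X^T_{M\oplus N}=\X^T_M\X^T_N$ also lets us write $\X^{\boldsymbol\Gamma}_{Y_M}$ for the full middle term rather than as a product over its indecomposable summands.

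The final step substitutes \cref{prop:principal-multiplicity}, which gives $m_i(Z_M)=\dim_{\kk}(\tau_AM)_i$, into the frozen monomial. This yields \eqref{eq:principal-AR}.

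No genuinely hard step remains, since the content is already in \cref{prop:principal-multiplicity} and \cref{cor:AR}. The main point of care is bookkeeping. The multiplicities in \cref{prop:principal-multiplicity} must refer to the same minimal companion cosyzygy as in \eqref{eq:AR-and-cosy}. This holds because both are minimal left $\Pcal$-approximations of $Z_M$, hence unique up to isomorphism. We should also confirm that the representative $\tau Z_M$ has no projective--injective summands, which holds by the convention fixed before \eqref{eq:AR-and-cosy}.
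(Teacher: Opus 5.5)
Your proposal is correct and matches the paper's proof, which simply combines \cref{cor:frozen-mesh} with \cref{prop:principal-multiplicity}. Your extra checks are the bookkeeping that the paper's one-line citation leaves implicit: Jacobi-finiteness of $(\widehat Q,F,0)$ because $\widehat Q$ is acyclic and $W=0$, indecomposability and non-projectivity of $Z_M$ via \cref{lem:principal-lift-minimal}, and uniqueness of the minimal companion cosyzygy.
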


\begin{proof}
 The proof follows from \cref{cor:frozen-mesh} and \cref{prop:principal-multiplicity}.
\end{proof}

\begin{remark}[Comparison with Dupont character with coefficients]\label{rem:dupont-comparison}
Put $y_i=x_{i^+}$ and identify right $A=\kk Q$-modules with left $\kk Q^{\mathrm{op}}$-modules. Dupont \cite[Definition~2.1]{DupontCoefficients} defined the classical cluster character with coefficients, and Dupont \cite[Proposition~2.2]{DupontCoefficients} proved its multiplication formula for almost split sequences. With the usual suspension convention, that result gives the  identity corresponding to \eqref{eq:principal-AR}. In particular, it already covers the homogeneous tube case. We do not repeat Dupont's comparison or proof here. The additional information in \cref{prop:principal-multiplicity,cor:principal-AR} is the explicit calculation of the projective--injective object whose character realises the coefficient term in the Higgs category.
\end{remark}

\subsubsection{The Higgs category is genuinely extriangulated}
\label{subsec}

In the principal-coefficient case, the Higgs category provides a
categorification which is genuinely extriangulated.  In particular, it
does not reduce to either an exact Frobenius category or a triangulated
category.

\begin{proposition}
\label{prop}
Let $Q$ be an acyclic quiver and let $\mathcal H_Q$ be the corresponding
principal-coefficient Higgs category. Then $\mathcal H_Q$ is neither exact
nor triangulated.
\end{proposition}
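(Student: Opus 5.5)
We prove the two assertions separately, in both cases using the nonzero projective--injective objects $I_i=\Gamma_{i^+}$ ($i\in Q_0$ nonempty) and the nonzero stable category $\underline{\mathcal H_Q}\simeq\mathcal C_Q$.

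\emph{Not triangulated.} Suppose $\mathcal H_Q$ carried a triangulated structure inducing its extriangulated structure. Then $\E(X,Y)\cong\Hom(X,\Sigma Y)$, and in a triangulated category the only objects $P$ with $\E(P,-)=0$ are zero, since $\Hom(P,\Sigma(\Sigma^{-1}P))\ni 1_P$. The object $I_i$ is nonzero and projective, because $\End_{\mathcal H_Q}(I_i)=\kk$ (as computed in the proof of \cref{prop:principal-multiplicity}), a contradiction. Equivalently, a triangulated extriangulated category has $\Pcal=0$, while here $\Pcal=\add(\bigoplus_i I_i)\neq0$.

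\emph{Not exact.} Suppose the extriangulated structure came from an exact structure. In an exact category every inflation is a monomorphism and every deflation is an epimorphism. Pick a mutable vertex $i$ with $M=P_i$ (or any module) and consider a non-projective indecomposable object, for instance $Z=Z_M$ for $M$ indecomposable non-projective (which exists unless $Q$ is of type $A_1$; otherwise use $Z=\Sigma$-shift of $\Gamma_i$ realised as its cosyzygy). Its companion cosyzygy $\E$-triangle
\[
 Z\xrightarrow{u}I_Z\xrightarrow{g}\tau Z\dashrightarrow
\]
has $I_Z\in\Pcal$. We plan to show that $g$ cannot be an epimorphism in $\mathcal H_Q$. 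The idea is to find a nonzero morphism $h\colon\tau Z\to W$ with $hg=0$. Apply $\Hom(-,W)$: exactness of $\Hom(\tau Z,W)\to\Hom(I_Z,W)$ preceded by $\E$-connecting terms gives $\Ker(g^*)$ equal to the image of $\Hom(Z,\Sigma^{-1}\!\!\;W)$-type terms. More concretely, take $W=\tau Z$ and $h=1_{\tau Z}$ composed suitably: since $\pi(\tau Z)\neq0$, the identity does not factor through $\Pcal$. We instead use the long exact sequence $\Hom(\tau Z,\tau Z)\xrightarrow{g^*}\Hom(I_Z,\tau Z)$ and the fact that, in an exact Frobenius category, $g$ epic forces $\Hom(\tau Z,-)\hookrightarrow\Hom(I_Z,-)$. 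Our strategy is dimension counting: using the Higgs description, $\dim\Hom(I_Z,\tau Z)$ is controlled by degree-zero paths from frozen vertices, namely through $c_j$, and we compare it with $\dim\End(\tau Z)$, computed via \cref{lem:corner}-type path calculations. The simplest case is $Q=A_1$: there $\Gamma_1$, $I_1$, and the object $Z$ with stable image $\Sigma\Gamma_1$ are explicit, and one checks directly that $\Hom(I_1,Z)=0$ while the deflation $I_1\to Z$ exists, so it is not epic. The general case reduces to this by localising at a vertex, or by exhibiting the same configuration at a sink $i$ of $Q$, where $\Hom(I_i,-)$ is computed by paths starting with $c_i$.

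The main obstacle is the exactness part. Showing that no exact structure realises $\E$ requires an explicit non-monic inflation or non-epic deflation. The computation of Hom-spaces out of frozen objects in the Higgs category via Wu's fully faithful quotient on the fundamental domain \cite[Proposition~5.20]{WuRelative} is where the care lies. I would first settle the sink/$A_1$ configuration by direct computation of $\Hom_{\mathcal H_Q}(I_i,\Sigma\text{-type object})$ through degree-zero cohomology, and then argue in general from it.
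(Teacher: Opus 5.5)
The not-triangulated half of your proposal is the paper's argument and is fine. The non-exactness half has a genuine gap.

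Your main line is to show that the deflation $g\colon I_Z\to\tau Z$ of the companion cosyzygy of $Z=Z_M$, with $M$ a non-projective module, is not epic. You abandon it midway in favour of an unspecified ``dimension count''. It also cannot work by the mechanism you found for $A_1$. Here $\tau Z_M\cong Z_{\tau_AM}$, and applying $\Hom(I_i,-)$ to $P_1^{\boldsymbol\Gamma}\to P_0^{\boldsymbol\Gamma}\to Z_N\etri$ gives $\Hom_{\mathcal H_Q}(I_i,Z_N)\cong Ne_i$. Hence $\Hom_{\mathcal H_Q}(I_i,\tau Z_M)\cong(\tau_AM)e_i$ is nonzero on the support of $\tau_AM$, and $I_{Z_M}\neq0$. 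Your fallback, that the general case ``reduces to $A_1$ by localising at a vertex'', is not an argument. You give no functor relating $\mathcal H_Q$ and $\mathcal H_{A_1}$, and non-exactness does not transfer along such a relation automatically. As written, non-exactness is established only for $Q=A_1$, and even there only as ``one checks directly''.

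The missing observation is already recorded in the paper. There is no degree-zero path from a mutable vertex to a frozen one, so by Wu's full faithfulness on the fundamental domain, $\Hom_{\mathcal H_Q}(\Gamma_j,I_i)=0$ for every mutable $j$ and every $i$. So take $Z=\Gamma_j=Z_{P_j}$ itself. It is nonzero and non-projective in $\mathcal H_Q$; whether $M$ is non-projective as an $A$-module is irrelevant. Enough injectives gives an $\E$-triangle $\Gamma_j\xto{x}I\to C\etri$ with $I\in\Pcal$, and necessarily $x=0$. In fact the minimal companion cosyzygy is $\Gamma_j\to0\to\tau\Gamma_j\etri$. If the extriangulated structure were induced by an exact one, inflations would be monomorphisms, forcing $\Gamma_j=0$, a contradiction. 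Dually, $0\to\tau\Gamma_j$ is a non-epic deflation. This is exactly your $A_1$ configuration, now available at every vertex of every nonempty acyclic $Q$, with no sink or reduction needed.

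For comparison, the paper argues non-exactness by appealing to Pressland's result on Frobenius exact categorifications of principal-coefficient cluster algebras, and to Wu's stalk criterion via $H^{-1}(e_{i^+}\Gamma_{\mathrm{rel}}e_i)\neq0$. The intrinsic argument above, which your plan was reaching for, is more elementary and self-contained.
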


\begin{proof}
We first prove that $\mathcal H_Q$ is not exact.
By Wu's construction, $\mathcal H_Q$ is a Frobenius extriangulated
category whose projective-injective objects are generated by the frozen
summands. Moreover, its stable category is triangle equivalent to the
corresponding Amiot generalized cluster category:
$$
\underline{\mathcal H_Q}\simeq \mathcal C(Q,W).
$$
The frozen summands encode the principal coefficients.

Suppose that $\mathcal H_Q$ were exact. Then the above Frobenius
extriangulated structure would give a Frobenius exact categorification of
the principal-coefficient cluster algebra with the usual cluster-tilting
object. By \cite{PresslandPrincipal}, Pressland proved that acyclic cluster algebras with
principal coefficients do not admit such a Frobenius exact
categorification satisfying the standard cluster-tilting conditions. Hence $\mathcal H_Q$ cannot be exact.

We can also see this obstruction intrinsically from the relative
Ginzburg model. In the present case, Wu's stalk criterion applies. The
frozen radical representability condition is automatic, whereas the
relative Ginzburg algebra has a non-trivial negative-degree corner:
$$
H^{-1}(e_{i^+}\Gamma_{\mathrm{rel}}e_i)\neq 0 .
$$
Therefore the relative Ginzburg algebra is not a stalk algebra, and Wu's
criterion implies again that $\mathcal H_Q$ is not exact.

It remains to show that $\mathcal H_Q$ is not triangulated.  The category
$\mathcal H_Q$ contains non-zero projective-injective objects, namely the
frozen summands.  If a Frobenius extriangulated category were induced by
a triangulated category, then for a projective object $P$ we would have
$$
\mathbb E(P,X)=0
$$
for all objects $X$.  Since in a triangulated category
$$
\mathbb E(P,X)\simeq \operatorname{Hom}(P,\Sigma X),
$$
taking $X=\Sigma^{-1}P$ gives
$$
\operatorname{Hom}(P,P)=0.
$$
Hence $P=0$.  Therefore a triangulated Frobenius category has no
non-zero projective objects.  Since $\mathcal H_Q$ has non-zero
projective-injective frozen summands, it cannot be triangulated.
\end{proof}

\subsubsection{Example: Affine homogeneous tubes}
Assume that $Q$ is affine.  Let $R_\lambda$ be a quasi-simple in a homogeneous tube of $\modu A$, $\delta=(\delta_i)_{i\in Q_0}$  the minimal positive imaginary root and put $Z_\lambda=Z_{R_\lambda}$.

A homogeneous tube has rank one, so $\tau_A R_\lambda\simeq R_\lambda$. Over $\kk$, its quasi-simple has dimension vector $\delta$ and endomorphism ring $\kk$ by Ringel \cite{RingelTame}.  Hence \cref{prop:principal-multiplicity} gives $m_i(Z_\lambda)=\delta_i$ and therefore $I_{Z_\lambda}\simeq\bigoplus_iI_i^{\delta_i}$.

By \cref{lem:principal-lift-minimal}, $Z_\lambda$ is indecomposable and non-projective and its stable image is $R_\lambda$.  In the orbit category $\tau_D^{-1}\Sigma$ becomes isomorphic to the identity.  Hence
\[
 \Sigma R_\lambda\simeq \tau_D R_\lambda
 \simeq \tau_A R_\lambda\simeq R_\lambda.
\]
Since $\pi(\tau Z_\lambda)\simeq\Sigma\pi Z_\lambda$, \cref{lem:stable-lift} gives $\tau Z_\lambda\simeq Z_\lambda$ in $\mathcal H_Q$.

We next compute the extension dimension.  By the formula for the orbit category of Buan, Marsh, Reineke, Reiten and Todorov \cite{BMRRT},
\[
 \End_{\mathcal C_Q}(R_\lambda)
 =\bigoplus_{m\in\mathbb Z}
 \Hom_{D^b(A)}\!\left(R_\lambda,
 (\tau_D^{-1}\Sigma)^mR_\lambda\right).
\]
All objects $\tau_D^{-m}R_\lambda$ are regular $A$-modules and $A$ is hereditary.  Therefore the summands with $m<0$ vanish for degree reasons, those with $m\ge2$ vanish because $\Ext_A^m=0$, and only $m=0,1$ remain.  Thus
\[
 \End_{\mathcal C_Q}(R_\lambda)
 \simeq \End_A(R_\lambda)
 \oplus\Ext_A^1(R_\lambda,\tau_A^{-1}R_\lambda).
\]
Since $\tau_A R_\lambda\simeq R_\lambda$, the second summand is $\Ext_A^1(R_\lambda,R_\lambda)$.  The hereditary Euler form gives
\[
 \dim_{\kk}\Hom_A(R_\lambda,R_\lambda)
 -\dim_{\kk}\Ext_A^1(R_\lambda,R_\lambda)
 =\langle\delta,\delta\rangle=0.
\]
As $\End_A(R_\lambda)\simeq\kk$, it follows that
\[
 \dim_{\kk}\Ext_A^1(R_\lambda,R_\lambda)=1,
 \hspace{2.5mm}
 \dim_{\kk}\End_{\mathcal C_Q}(R_\lambda)=2.
\]
Thus the space of self extensions in $\modu A$ is one-dimensional.

Finally, we use $\pi(\tau Z_\lambda)\simeq\Sigma R_\lambda$.  The $2$-Calabi--Yau property of $\mathcal C_Q$ gives
\[
\begin{aligned}
 \E_{\mathcal H_Q}(Z_\lambda,\tau Z_\lambda)
 &\simeq\Hom_{\mathcal C_Q}
      (R_\lambda,\Sigma\pi(\tau Z_\lambda))\\
 &\simeq\Hom_{\mathcal C_Q}(R_\lambda,\Sigma^2R_\lambda)\\
 &\simeq D\End_{\mathcal C_Q}(R_\lambda).
\end{aligned}
\]
Hence $\dim_{\kk}\E_{\mathcal H_Q}(Z_\lambda,\tau Z_\lambda)=2$.

If $\tau Z_\lambda\to Y_\lambda\to Z_\lambda\dashrightarrow$ is the Auslander--Reiten $\E$-triangle, by \cref{cor:principal-AR} and $\tau Z_\lambda\simeq Z_\lambda$, then
\begin{equation}\label{eq:affine-principal}
 \bigl(\X^{\boldsymbol\Gamma}_{Z_\lambda}\bigr)^2
 =\X^{\boldsymbol\Gamma}_{Y_\lambda}+\prod_{i\in Q_0}x_{i^+}^{\delta_i}.
\end{equation}

By \cref{rem:dupont-comparison}, \eqref{eq:affine-principal} agrees with the homogeneous tube case of the formula proved by Dupont \cite[Proposition~2.2]{DupontCoefficients}.

\medskip
\hspace{-5.5mm}\textbf{Acknowledgements:} Ming Ding and Fan Xu are supported by the National Natural Science Foundation of China (Grant No. 12371036). Panyue Zhou is supported by the National Natural Science Foundation of China (Grant No.~12371034).
\newpage

\hspace{-5.5mm}{\bf Statement on AI:}\hspace{2mm} ChatGPT was used only for language editing and improving the clarity of the manuscript. All mathematical results, arguments, and proofs were developed and verified by the authors.
\vspace{3mm}

\hspace{-5.5mm}\textbf{Data Availability:}\hspace{2mm} Data sharing not applicable to this article as no datasets were generated or analysed during
the current study.
\vspace{3mm}

\hspace{-5.5mm}\textbf{Conflict of Interests:}\hspace{2mm} The authors declare that they have no conflicts of interest to this work.

\hspace{-6.5mm}
\textbf{Ming Ding}\\[1mm]
School of Mathematics and Information Science,
Guangzhou University, Guangzhou 510006, P. R. China\\[1mm]
Email: dingming@gzhu.edu.cn
\vspace{3mm}

\hspace{-6.5mm}
\textbf{Fan Xu}\\[1mm]
Department of Mathematical Sciences,
Tsinghua University,
Beijing 100084, P.~R.~China\\[1mm]
E-mail: fanxu@mail.tsinghua.edu.cn
\vspace{3mm}

\hspace{-6.5mm}
\textbf{Panyue Zhou}\\[1mm]
School of Mathematics and Statistics, Changsha University of Science and Technology, Changsha 410114, Hunan, P. R. China\\[1mm]
Email: panyuezhou@163.com\\

\end{document}